\newtheorem{thm}{Theorem}[section]
\newtheorem{prop}[thm]{Proposition}
\newtheorem{lem}[thm]{Lemma}
\newtheorem{cor}[thm]{Corollary}
\theoremstyle{definition}
\newtheorem{defn}[thm]{Definition}
\newtheorem{rem}[thm]{Remark}
\newtheorem{ex}{Example}
\newtheorem{res}[thm]{Result}
\newcommand{\mbb}{\mathbb}
\newcommand{\Om}{\Omega}
\newcommand{\C}{\mathbb{C}}
\begin{document}
\title{Transformation formula for the reduced Bergman kernel and its application}
	
\author{Sahil Gehlawat**,  Aakanksha Jain*** and Amar Deep Sarkar*}

\address{ADS: Harish-Chandra Research Institute, Prayagraj (Allahabad) 211019, India}
\email{amardeepsarkar@hri.res.in}

\address{SG: Department of Mathematics, Indian Institute of Science, Bangalore 560 012, India}
\email{sahilg@iisc.ac.in}
	
\address{AJ: Department of Mathematics, Indian Institute of Science, Bangalore 560 012, India}
\email{aakankshaj@iisc.ac.in}

\keywords{reduced Bergman space, Bergman space, reduced Bergman kernel, Bergman kernel, transformation formula}
	
\subjclass{Primary: 30H20, 46E22; Secondary: 32H99}

\thanks{*The author is supported by the postdoctoral fellowship of Harish-Chandra Research Institute, Prayagraj (Allahabad).}	
\thanks{**The author is supported by the CSIR SPM Ph.D. fellowship.}
\thanks{***The author is supported by PMRF Ph.D. fellowship.}
\begin{abstract}
In this article, we prove the transformation formula for the reduced Bergman kernels under proper holomorphic correspondences between bounded domains in the complex plane. As a corollary, we obtain the transformation formula for the reduced Bergman kernels under proper holomorphic maps. We also establish the transformation formula for the weighted reduced Bergman kernels under proper holomorphic maps. Finally, we provide an application of this transformation formula.
\end{abstract}

\maketitle

\section{Introduction}
Recall that the Bergman space associated with a domain $U\subset\mathbb{C}$ consists of square integrable holomorphic functions on it. An important space that has a close relationship with this space is the space of all holomorphic functions whose derivatives are square-integrable with respect to the area measure. This space can be associated with a closed subspace of the Bergman space, which we call the reduced Bergman space. The reduced Bergman space is a reproducing kernel Hilbert space with its reproducing kernel called the reduced Bergman kernel; see below for the definitions. 
\medskip 

It would be interesting to create a dictionary between the Bergman kernel and the reduced Bergman kernel, i.e., to know the properties common to both the kernels. This note is a small contribution in this direction. The concerned property here is the existence of the transformation formula for the reduced Bergman kernels under proper holomorphic correspondences between bounded planar domains. This is motivated by Bell's transformation formula for the Bergman kernel under proper mappings and correspondences (see \cite{ProperHoloCorrespondenceTransformation}, \cite{TransformationFormulaBergmanKernel2}, \cite{TransformationFormula1982}, \cite{BellProperHolomorphic} and \cite[Theorem~16.5]{BellBook}). The transformation formula under proper mappings will follow as a corollary. But we will see that more is true: the transformation formula for the weighted reduced Bergman kernels under proper holomorphic mappings between bounded planar domains. Finally, we will see an application of this transformation formula; any proper holomorphic map from a bounded planar domain to the unit disc is rational if the reduced Bergman kernel of the domain is rational. The techniques of the proofs will be similar to Bell's \cite{BellProperHolomorphic} work on the transformation formula for the Bergman kernel and its applications.
\medskip

M. Sakai \cite{SakaiSpanMetricCurvatureBound} defined the reduced Bergman kernel in the following way: Let $\Om$ be a bounded planar domain. For $\xi \in \Om$, consider the complex linear space
\[
AD(\Om, \xi) = \left\lbrace f \in \mathcal{O}(\Om) : f(\xi) = 0 , \int_{\Om} \lvert f'(z)\rvert^2 dA <\infty\right\rbrace
\]
equipped with the inner-product $\langle f,g\rangle = \int_{\Om} f'(z)\,\overline{g'(z)} \,dA$, where $dA$ is the area Lebesgue measure on $\Om$. Along with this inner-product, $AD(\Om, \xi)$ is a complex Hilbert space. It can be proved that $AD(\Om,\xi)\ni f \longrightarrow f'(\xi)\in\mathbb{C}$ is a bounded linear functional. The Riesz representation theorem therefore gives a unique function $M(\cdot, \xi)\in AD(\Om,\xi)$ such that $f'(\xi) = \langle f, M(\cdot, \xi)\rangle$ for all $f \in AD(\Om, \xi)$. The function $\tilde{K}(z,\xi) := \frac{d}{dz}M(z,\xi)$ is defined as the reduced Bergman kernel of $\Om$.
\medskip

We will give an another equivalent definition of the reduced Bergman kernel (and the weighted reduced Bergman kernel).

\begin{defn}
Let $\Om$ be a domain in $\mathbb{C}$. The reduced Bergman space of $\Om$ is the space of all the square-integrable holomorphic functions on $\Om$ whose primitive exists on $\Om$, i.e., 
\[
\mathcal{D}(\Om)= \left\{f \in \mathcal{O}(\Om): f = g' \ \text{for some} \ g\in \mathcal{O}(\Om) \,\, \text{and} \,\,  \int_{\Om} \lvert f(z)\rvert^2 dA(z) < \infty \right\}.
\]
\noindent This is a Hilbert space with respect to the inner product
\[
 \langle f, g \rangle := \int_{\Om}f(z) \overline{g(z)} dA(z). 
\]
For every $\zeta\in \Om$, the evaluation functional
\[
f \mapsto f(\zeta),\quad\quad f\in\mathcal{D}(\Om)
\]
is a bounded linear functional, and therefore $\mathcal{D}(\Om)$ is a reproducing kernel Hilbert space. The reproducing kernel of $\mathcal{D}(\Om)$, denoted by $\tilde{K}_{\Om}(\cdot, \cdot)$, is called the reduced Bergman kernel of $\Om$. It satisfies the reproducing property:
\[
f(\zeta) = \int_{\Om}f(z) \overline{\tilde{K}_{\Om}(z, \zeta)} dA(z)
\]
for all $f\in\mathcal{D}(\Om)$ and $\zeta\in \Om$. 
\end{defn}

It can be seen that the Bergman kernel and the reduced Bergman kernel are same for simply connected domains as every holomorphic function has a primitive on a simply connected domain. As we shall observe in the example below, it need not be the case for non-simply connected domains.

\begin{ex}
Consider the annulus $A$ defined by
\[
A=\{z\in\mathbb{C}:1<\vert z\vert<2\}. 
\]
The Bergman kernel of the annulus is (by the calculations in \cite{krantz})
\[
K_{A}(z,w)=\sum_{\substack{j\in\,\mathbb{Z}
\\
j\neq -1}}
\frac{j+1}{\pi(2^{2j+2}-1)}z^j\bar{w}^j + \frac{1}{2\pi\log 2}\frac{1}{z\bar{\zeta}}
\]
The functions 
\[
\psi_j(z)=z^j,\quad j\in\mathbb{Z}\setminus\{-1\}
\]
form a complete orthogonal basis of $\mathcal{D}(A)$, and therefore the reduced Bergman kernel of $A$ is given by
\[
\tilde{K}_A(z,w)=\sum_{\substack{j\in\,\mathbb{Z}
\\
j\neq -1}}
\frac{j+1}{\pi(2^{2j+2}-1)}z^j\bar{w}^j.
\]
\end{ex}

\medskip

Now we are ready to state our main results. First, we recall the definition of a proper holomorphic correspondence (see \cite{ProperHoloCorrespondenceTransformation}). Let $ \Om_1 $ and $ \Om_2 $ be domains in $\mathbb{C}$, and $ \pi_1: \Om_1 \times \Om_2 \longrightarrow \Om_1 $, $ \pi_2: \Om_1 \times \Om_2 \longrightarrow \Om_2 $ be the projections. If $ V $ is a complex sub-variety of $ \Om_1 \times \Om_2 $, then consider the associated multi-valued function $ f: \Om_1 \multimap \Om_2 $ given by $ f(z) = \pi_2  \pi_1^{-1}(z) =  \{ w : (z, w ) \in V \} $. The map $f$ is called a holomorphic correspondence and $ V $ is called the graph of $ f $.  The correspondence $ f $ is said to be proper if
the projection maps $ \pi_1: V \longrightarrow \Om_1 $ and $ \pi_2 :V \longrightarrow \Om_2 $ are proper (being proper means that the inverse images of compact sets are compact). 

We remark that there exist sub-varieties $ V_1$ and $ V_2 $ of $\Om_1$ and $\Om_2$ respectively, and positive integers $ p $ and $ q $ such that $ \pi_2  \pi_1^{-1} $ is locally given by $ p $ holomorphic maps on $ \Om_1 \setminus V_1 $ which we will denote by $ \{f_i\}_{i = 1}^{p} $, and $ \pi_1  \pi_2^{-1} $ is locally given by $ q $ holomorphic maps on $\Om_2 \setminus V_2 $ which we will denote by $ \{F_i\}_{i = 1}^{q} $.
Note that the sub-varieties $ V_1 $ and $ V_2 $ are discrete subsets of $ \Om_1 $ and $ \Om_2 $ respectively. Proper holomorphic correspondences arise naturally in the field of complex function theory.

\begin{thm}\label{T:TransFormulaCorrespondence}
	Let $ \Om_1$ and $\Om_2 $ be bounded domains in $\mathbb{C}$. If $ f : \Om_1 \multimap \Om_2 $ is a proper holomorphic correspondence, then the reduced Bergman kernels $\tilde{K}_l$ associated with $ \Om_l $, $ l =1, 2 $, transform according to
	\[
	\sum_{i=1}^{p} f_i^{\prime}(z)\tilde{K}_2(f_i(z), w) = \sum_{j = 1}^{q}\tilde{K}_1(z, F_j(w)) \overline{ F_j^{\prime}(w)},
	\]
	for all $z \in \Om_1$ and $w \in \Om_2$, where $ \{f_i\}_{i = 1}^{p} $, $ \{F_j\}_{j=1}^{q} $, and the positive integers $ p, q $ are as above.
\end{thm}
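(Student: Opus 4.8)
The plan is to realize both sides of the asserted identity as the action of a single averaging operator on a reproducing kernel, and then to reduce the whole statement to an adjointness relation between the two operators attached to the two directions of the correspondence. Concretely, I would introduce
\[
(\Phi g)(z) = \sum_{i=1}^{p} f_i'(z)\, g(f_i(z)), \qquad g \in \mathcal{D}(D_2),
\]
\[
(\Psi u)(w) = \sum_{j=1}^{q} F_j'(w)\, u(F_j(w)), \qquad u \in \mathcal{D}(D_1),
\]
with $f_i$ and $F_j$ the local branches described above. The point of this bookkeeping is that the left-hand side of the theorem is exactly $(\Phi\,\tilde{K}_2(\cdot,w))(z)$, while the right-hand side is $\bigl(\sum_{j}\overline{F_j'(w)}\,\tilde{K}_1(\cdot,F_j(w))\bigr)(z)$. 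Thus it suffices to prove the identity of functions $\Phi\,\tilde{K}_2(\cdot,w) = \sum_{j}\overline{F_j'(w)}\,\tilde{K}_1(\cdot,F_j(w))$ inside $\mathcal{D}(D_1)$.

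First I would check that $\Phi$ maps $\mathcal{D}(D_2)$ boundedly into $\mathcal{D}(D_1)$ (and symmetrically for $\Psi$). Since the branches $f_i$, together with their derivatives $f_i'$, are permuted as $z$ encircles a point of the discrete set $V_1$, the sum $\Phi g$ is a symmetric function of the branches, hence single-valued and holomorphic on $D_1\setminus V_1$; the estimate $\|\Phi g\|_{L^2(D_1)} \le C\|g\|_{L^2(D_2)}$, obtained from the branched-covering change of variables, then lets one remove the singularities along $V_1$, so that $\Phi g$ is holomorphic and square-integrable on all of $D_1$. The step that genuinely uses the reduced (rather than the full) Bergman structure, and which I expect to be the main obstacle, is the verification that $\Phi g$ admits a holomorphic primitive on $D_1$. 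Writing $g = h'$ with $h\in\mathcal{O}(D_2)$, the function $G(z) = \sum_i h(f_i(z))$ is again symmetric in the branches, hence single-valued and holomorphic on $D_1\setminus V_1$ with $G'=\Phi g$ there; consequently $\int_\gamma \Phi g\,dz = 0$ for every loop $\gamma$ in $D_1\setminus V_1$. Because $\Phi g$ extends holomorphically across the discrete set $V_1$ and the inclusion induces a surjection $H_1(D_1\setminus V_1)\to H_1(D_1)$, every period of $\Phi g$ over a loop in $D_1$ vanishes, so $\Phi g$ has a primitive on all of $D_1$ and therefore lies in $\mathcal{D}(D_1)$. This period computation is precisely the feature absent from Bell's Bergman-kernel argument.

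Next I would establish the adjoint relation $\Phi^{*} = \Psi$. The cleanest route is to interpret both $\langle \Phi g, u\rangle_{D_1}$ and $\langle g, \Psi u\rangle_{D_2}$ as the integral over the smooth locus of the graph $V\subset D_1\times D_2$ of the single form
\[
\omega = \tfrac{i}{2}\, g(w)\,\overline{u(z)}\, dw\wedge d\bar z,
\]
where $z=\pi_1|_V$ and $w=\pi_2|_V$ are the coordinate functions on $V$. Parametrizing $V$ by $\pi_1$ (so $z$ is the coordinate, $w=f_i(z)$, $dw=f_i'(z)\,dz$) turns $\int_V\omega$ into $\int_{D_1}\sum_i f_i'(z)g(f_i(z))\overline{u(z)}\,dA = \langle \Phi g, u\rangle_{D_1}$, while parametrizing by $\pi_2$ (so $w$ is the coordinate, $z=F_j(w)$, $d\bar z = \overline{F_j'(w)}\,d\bar w$) turns it into $\int_{D_2} g(w)\overline{\sum_j F_j'(w)u(F_j(w))}\,dA = \langle g, \Psi u\rangle_{D_2}$. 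Since the branch loci are discrete, hence of measure zero, the two parametrizations compute the same number, giving $\langle \Phi g, u\rangle_{D_1} = \langle g, \Psi u\rangle_{D_2}$ for all $g,u$, that is, $\Phi^{*}=\Psi$.

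Finally I would combine these ingredients. For arbitrary $u\in\mathcal{D}(D_1)$, the reproducing property together with $\Phi^{*}=\Psi$ gives
\[
\langle u,\, \Phi\,\tilde{K}_2(\cdot,w)\rangle_{D_1} = \langle \Phi^{*}u,\, \tilde{K}_2(\cdot,w)\rangle_{D_2} = (\Psi u)(w) = \sum_{j=1}^{q} F_j'(w)\,u(F_j(w)),
\]
whereas the reproducing property applied to the right-hand candidate gives
\[
\Big\langle u,\ \sum_{j=1}^{q}\overline{F_j'(w)}\,\tilde{K}_1(\cdot,F_j(w))\Big\rangle_{D_1} = \sum_{j=1}^{q} F_j'(w)\,u(F_j(w)).
\]
Since these inner products agree for every $u\in\mathcal{D}(D_1)$, the two elements of $\mathcal{D}(D_1)$ coincide, which is exactly the claimed transformation formula. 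The remaining care is routine: justifying absolute convergence of the integral defining $\omega$ via Cauchy–Schwarz and the boundedness of $\Phi,\Psi$, and confirming that the excluded discrete sets $V_1,V_2$ contribute nothing.
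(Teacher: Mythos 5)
Your proposal is correct, and its overall architecture --- the averaging operators $\Phi=\Gamma_1$, $\Psi=\Gamma_2$, the adjointness $\Phi^{*}=\Psi$, and the closing reproducing-kernel computation --- is exactly the paper's. Two of the supporting steps are argued differently, and both of your variants work. For the invariance of the reduced Bergman space, the paper first extends $\Phi g$ across $V_1$ by the $L^2$ removable-singularity theorem (Result 2.1) and then invokes a separate lemma (Lemma 2.4: a holomorphic function on $D_1\setminus V_1$ whose derivative extends across $V_1$ itself extends); you instead argue via periods, using that $G=\sum_i h\circ f_i$ is a single-valued primitive off $V_1$ and that $H_1(D_1\setminus V_1)\to H_1(D_1)$ is surjective. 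The two arguments carry the same content, but yours makes the topological mechanism explicit where the paper's is purely local; you correctly identify this as the one step that genuinely uses the reduced, rather than full, Bergman structure. For the adjoint identity, the paper localizes to small patches with compactly supported smooth functions, changes variables branch by branch, and concludes by density and a partition of unity; your single integral of the form $\omega$ over the graph variety packages the same change of variables more invariantly, at the cost of having to justify that the two parametrizations compute the same absolutely convergent integral, which your Cauchy--Schwarz remark and the measure-zero branch loci handle. One small loose end: your argument establishes the identity only for $w\in D_2\setminus V_2$, where the branches $F_j$ are defined; to obtain it for all $w\in D_2$ as stated you still need the paper's closing observation that the left-hand side is anti-holomorphic in $w$, so the points of $V_2$ are removable singularities of the right-hand side.
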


Before we state the next result, we recall the definition of a proper holomorphic map. Let $\Om_1$ and $\Om_2$ be domains in $\mathbb{C}$. A holomorphic map $f : \Om_1  \rightarrow \Om_2 $ is called a proper holomorphic map if $f^{-1}(K)$ is compact in $\Om_1$ whenever $K$ is compact in $\Om_2$. Let $V \subset \Om_2$ denote the set of all critical values of $f$ and $\tilde{V} = f^{-1}(V) \subset \Om_1$. Note that both $V$ and $\tilde{V}$ are discrete sets. It is well known that $f : \Om_1 \setminus \tilde{V} \longrightarrow \Om_2 \setminus V$ is an $m$-to-$1$ holomorphic covering map for some $m \in \mathbb{N}$, where $m$ is called the multiplicity of $f$. We will denote the $m$ local inverses of $f$ on $\Om_2\setminus V$ by $\{F_k\}_{k=1}^{m}$.
Now as a special case of Theorem~\ref{T:TransFormulaCorrespondence}, we have the following corollary:

\begin{cor}
	Let $ \Om_1$ and $\Om_2$ be bounded domains in $\mathbb{C}$. If $ f : \Om_1 \longrightarrow \Om_2 $ is a proper holomorphic map, then the reduced Bergman kernels $\tilde{K}_1$ and $\tilde{K}_2$ associated with $ \Om_1$ and $ \Om_2 $ respectively, transform according to
	\[
	f^{\prime}(z)\tilde{K}_2(f(z), w) = \sum_{k = 1}^{m}\tilde{K}_1(z, F_k(w)) \overline{ F_k^{\prime}(w)},
	\]
	for all $z \in \Om_1$ and $w \in \Om_2$, where $ m $ is the multiplicity of $ f $, and the maps $ F_k $, for $1 \le k \le m$, are the local inverses of $ f $.
\end{cor}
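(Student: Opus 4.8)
The plan is to exhibit a proper holomorphic map as a degenerate proper holomorphic correspondence and then read the formula off directly from Theorem~\ref{T:TransFormulaCorrespondence}. Concretely, I would take the graph
\[
V = \{(z, w) \in D_1 \times D_2 : w = f(z)\},
\]
which is a complex sub-variety of $D_1 \times D_2$ because $f$ is holomorphic, and let $f : D_1 \multimap D_2$ denote the holomorphic correspondence it defines.

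The first step is to verify that this correspondence is proper. The projection $\pi_1 : V \to D_1$ is a biholomorphism, its inverse being the graph embedding $z \mapsto (z, f(z))$; in particular it is proper. Under this identification $\pi_2 : V \to D_2$ is just the map $f$ itself, so it is proper precisely because $f$ is assumed proper. Hence $f$ is a proper holomorphic correspondence and Theorem~\ref{T:TransFormulaCorrespondence} applies.

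Next I would match the data of the theorem with the data of $f$. Since $\pi_1^{-1}$ is single-valued, the multi-valued map $\pi_2 \pi_1^{-1}$ is the honest single-valued map $f$; thus $p = 1$ and $f_1 = f$. In the reverse direction, $\pi_1 \pi_2^{-1}(w) = f^{-1}(w)$, and after removing the discrete set $V \subset D_2$ of critical values, $f : D_1 \setminus \tilde{V} \longrightarrow D_2 \setminus V$ is an $m$-sheeted covering whose local sections are exactly the local inverses $\{F_k\}_{k=1}^{m}$ of $f$; thus $q = m$ and the maps $F_j$ of the theorem are the $F_k$ of the corollary. Substituting $p = 1$, $f_1 = f$, and $q = m$ into the conclusion of Theorem~\ref{T:TransFormulaCorrespondence} yields
\[
f'(z)\tilde{K}_2(f(z), w) = \sum_{k=1}^{m}\tilde{K}_1(z, F_k(w))\,\overline{F_k'(w)},
\]
which is the asserted identity, valid for all $z \in D_1$ and $w \in D_2$.

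There is no analytic obstacle here; the only point requiring care is the bookkeeping identification of the abstract branch sub-varieties $V_1, V_2$ from the correspondence framework with the concrete sets $\tilde{V} = f^{-1}(V)$ and $V$ of critical points and critical values of $f$, ensuring that the local inverses named in the corollary coincide with those produced by the theorem. Once this is settled, the transformation formula for maps is inherited verbatim from the correspondence case with no further work.
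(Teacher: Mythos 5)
Your proposal is correct and matches the paper's route exactly: the paper states this corollary without further argument as the special case of Theorem~1.2 in which the graph of $f$ is the correspondence, so that $p=1$, $f_1=f$, $q=m$, and the $F_j$'s are the local inverses of $f$. Your verification that both projections from the graph are proper and your identification of the branch data are precisely the bookkeeping the paper leaves implicit.
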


We would like to take this opportunity to give a transformation formula for kernels of a more general class of spaces, namely weighted reduced Bergman spaces. First we give a definition: 

\begin{defn}
Let $\Om$ be a domain in $\mathbb{C}$ and $\nu$ be a positive measurable function on $\Om$ such that $1/\nu\in L^{\infty}_{loc}(\Om)$. The weighted reduced Bergman space of $\Om$ with weight $\nu$ is the space of all the $\nu$-square integrable holomorphic functions on $\Om$ whose primitive exists on $\Om$, i.e., 
\[
\mathcal{D}^{\nu}(\Om)= \left\{f \in \mathcal{O}(\Om): f = g' \ \text{for some} \ g\in \mathcal{O}(\Om) \,\, \text{and} \,\,  \int_{\Om} \lvert f(z)\rvert^2 \nu(z) dA(z) < \infty \right\}.
\]
\noindent This is a Hilbert space with respect to the inner product
\[
\langle f, g \rangle_{\nu} := \int_{\Om}f(z) \,\overline{g(z)} \,\nu(z) \,dA(z). 
\]
For every $\zeta\in \Om$, the evaluation functional
\[
f \mapsto f(\zeta),\quad\quad f\in\mathcal{D}^{\nu}(\Om)
\]
is a bounded linear functional, and therefore $\mathcal{D}^{\nu}(\Om)$ is a reproducing kernel Hilbert space. The reproducing kernel of $\mathcal{D}^{\nu}(\Om)$, denoted by $\tilde{K}^{\nu}_{\Om}(\cdot, \cdot)$, is called the weighted reduced Bergman kernel of $\Om$ with weight $\nu$. It satisfies the reproducing property:
\[
f(\zeta) = \int_{\Om}f(z) \,\overline{\tilde{K}^{\nu}_{\Om}(z, \zeta)} \,\nu(z)\,dA(z)
\]
for all $f\in\mathcal{D}^{\nu}(\Om)$ and $\zeta\in \Om$. 
\end{defn}

\begin{rem}
The weighted reduced Bergman kernel $ \tilde{K}^{\nu}_\Om(z, \zeta) $ of $\Om$ is holomorphic in the first variable and anti-holomorphic in the second variable, and 
\[ 
\tilde{K}^{\nu}_\Om(z, \zeta) = \int_{\Om} \tilde{K}^{\nu}_\Om(\xi, \zeta) \overline{ \tilde{K}^{\nu}_\Om(\xi, z)} \nu(\xi) dA(\xi).
\]

\noindent For a bounded domain $\Om \subset\mathbb{C}$, $ \tilde{K}^{\nu}_\Om(\zeta, \zeta) > 0$ for every $\zeta\in \Om$.
\end{rem}

\begin{thm}\label{T:TansFormulaWeightedProper}
Let $ \Om_1$ and $\Om_2$ be bounded domains in $\mathbb{C}$ and $\nu$ be a positive measurable function on $\Om_2$ such that $1/\nu\in L^{\infty}_{loc}(\Om_2)$. If $ f : \Om_1 \longrightarrow \Om_2 $ is a proper holomorphic map, then the weighted reduced Bergman kernels $\tilde{K}^{\nu \circ f}_1$ and $\tilde{K}^{\nu}_2$ associated with $ \Om_1$ and $\Om_2 $ respectively, transform according to
\[
f^{\prime}(z)\tilde{K}^{\nu}_2(f(z), w) = \sum_{k = 1}^{m}\tilde{K}^{\nu \circ f}_1(z, F_k(w)) \overline{ F_k^{\prime}(w)},
\]
for all $z \in \Om_1$ and $w \in \Om_2$, where $ m $ is the multiplicity of $ f $, and the maps $ F_k $ are the local inverses of $ f $.
\end{thm}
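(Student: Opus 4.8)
The plan is to adapt Bell's operator-theoretic proof of the Bergman-kernel transformation formula to the reduced setting, the essential new point being that the relevant pull-back and push-forward operators must be shown to preserve the extra requirement that a function admit a holomorphic primitive. First note that $\nu\circ f$ is an admissible weight on $D_1$: since $1/(\nu\circ f)=(1/\nu)\circ f$ and $f$ carries compact sets to compact sets, $1/(\nu\circ f)\in L^{\infty}_{loc}(D_1)$, so $\mathcal{D}^{\nu\circ f}(D_1)$ is a well-defined reproducing kernel Hilbert space.

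I would introduce the pull-back operator $T:\mathcal{D}^{\nu}(D_2)\to\mathcal{D}^{\nu\circ f}(D_1)$ given by $(Th)(z)=f'(z)\,h(f(z))$. That $Th$ is holomorphic is clear, and it admits a primitive because if $h=H'$ with $H\in\mathcal{O}(D_2)$ then $Th=(H\circ f)'$. Writing $|f'|^2$ as the real Jacobian of $f$ and using that $f:D_1\setminus\tilde{V}\to D_2\setminus V$ is an $m$-to-$1$ covering, the change-of-variables formula gives $\|Th\|^2_{\nu\circ f}=\int_{D_1}|h(f(z))|^2|f'(z)|^2(\nu\circ f)(z)\,dA(z)=m\int_{D_2}|h(w)|^2\nu(w)\,dA(w)=m\|h\|_{\nu}^2$, so $T$ is bounded (indeed $\sqrt{m}$ times an isometry). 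I would then introduce the push-forward operator $\Lambda:\mathcal{D}^{\nu\circ f}(D_1)\to\mathcal{D}^{\nu}(D_2)$ by $(\Lambda g)(w)=\sum_{k=1}^{m} g(F_k(w))F_k'(w)$. As this expression is symmetric in the local inverses, it is single-valued and holomorphic on $D_2\setminus V$; a sheet-wise Cauchy--Schwarz estimate together with the change of variables shows $\Lambda g\in L^2(\nu)$ there, and since $1/\nu\in L^{\infty}_{loc}$ this forces local $L^2(dA)$-integrability near the discrete set $V$, so $\Lambda g$ extends holomorphically across $V$ by the removable singularity theorem. Finally, if $g=G'$ then $\Lambda g=\frac{d}{dw}\sum_k G(F_k(w))$; the symmetric sum $\sum_k G\circ F_k$ is single-valued on $D_2\setminus V$ and stays bounded near each point of $V$ (the branches $F_k(w)$ accumulate only at interior points of $D_1$, where $G$ is continuous), hence extends holomorphically and furnishes a primitive of $\Lambda g$. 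Thus $\Lambda$ indeed lands in $\mathcal{D}^{\nu}(D_2)$.

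The computational heart of the argument is the adjoint identity $\langle Th,g\rangle_{\nu\circ f}=\langle h,\Lambda g\rangle_{\nu}$, i.e. $\Lambda=T^{*}$. To prove it I would split the integral defining $\langle Th,g\rangle_{\nu\circ f}$ over the $m$ sheets of $f$ on $D_1\setminus\tilde{V}$ and change variables $w=f(z)$ on each, so that $z=F_k(w)$ and $dA(z)=|F_k'(w)|^2\,dA(w)$. Using $f'(F_k(w))=1/F_k'(w)$, the factor $f'(F_k(w))\,|F_k'(w)|^2$ collapses to $\overline{F_k'(w)}$, and summing over the sheets yields $\langle Th,g\rangle_{\nu\circ f}=\int_{D_2}h(w)\,\overline{\sum_k g(F_k(w))F_k'(w)}\,\nu(w)\,dA(w)=\langle h,\Lambda g\rangle_{\nu}$.

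With these tools the formula follows by a reproducing-kernel computation. Fix $w\in D_2\setminus V$ and set $L(\cdot)=f'(\cdot)\tilde{K}^{\nu}_{2}(f(\cdot),w)=T[\tilde{K}^{\nu}_{2}(\cdot,w)]$ and $R(\cdot)=\sum_k\overline{F_k'(w)}\,\tilde{K}^{\nu\circ f}_{1}(\cdot,F_k(w))$, both of which lie in $\mathcal{D}^{\nu\circ f}(D_1)$. For arbitrary $g\in\mathcal{D}^{\nu\circ f}(D_1)$, the adjoint identity and the reproducing property of $\tilde{K}^{\nu}_{2}$ give $\langle g,L\rangle_{\nu\circ f}=\langle \Lambda g,\tilde{K}^{\nu}_{2}(\cdot,w)\rangle_{\nu}=(\Lambda g)(w)$, while the reproducing property of $\tilde{K}^{\nu\circ f}_{1}$ gives $\langle g,R\rangle_{\nu\circ f}=\sum_k F_k'(w)\,g(F_k(w))=(\Lambda g)(w)$. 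Hence $\langle g,L-R\rangle_{\nu\circ f}=0$ for all $g$, so $L=R$, which is exactly the claimed identity on $D_2\setminus V$; both sides are anti-holomorphic in $w$ and the right-hand side extends across $V$ (being symmetric in the branches), so the identity persists for all $w\in D_2$. I expect the main obstacle to be not this final formal step but the verification that $T$ and especially $\Lambda$ respect the reduced structure, namely the well-definedness and holomorphic extension of $\Lambda g$ across the branch locus and the existence of primitives for $Th$ and $\Lambda g$, since this is precisely where the reduced Bergman space departs from the full Bergman space treated by Bell.
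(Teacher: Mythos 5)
Your proposal is correct and follows essentially the same route as the paper: the pull-back $T$ and push-forward $\Lambda$ are exactly the paper's $\Lambda_1$ and $\Lambda_2$, the adjoint identity $\langle Th,g\rangle_{\nu\circ f}=\langle h,\Lambda g\rangle_{\nu}$ is the paper's Lemma 3.3, and the final reproducing-kernel computation is identical. The only cosmetic differences are that you verify the primitive of $\Lambda g$ extends across $V$ by noting the symmetric sum $\sum_k G\circ F_k$ stays bounded near $V$ (the paper instead invokes its Lemma 2.4, extending $h$ once $h'$ is known to extend), and you compute the operator norms globally via the covering rather than through the paper's density-plus-partition-of-unity argument.
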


Bell \cite{BellProperHolomorphic} had shown that any proper holomorphic map $f : \Om \rightarrow \mathbb{D}$ from a bounded planar domain $\Om$ to the unit disc will be rational if the Bergman kernel of $\Om$ is a rational function. As an application of the above transformation formula, we will prove a similar result replacing the hypothesis of the Bergman kernel of $\Om$ being rational to the reduced Bergman kernel of $\Om$ being rational. To be precise, we prove:

\begin{thm}
Suppose $\Om$ is a bounded domain in $\mbb{C}$ whose associated reduced Bergman kernel is a rational function, then any proper holomorphic mapping $f : \Om \rightarrow \mbb{D}$ must be rational.
\end{thm}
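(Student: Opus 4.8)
The plan is to follow Bell's strategy for the Bergman kernel and exploit the transformation formula (Theorem~\ref{T:TransFormulaCorrespondence}, applied to the proper map $f:D\to\mathbb{D}$) together with the fact that the reduced Bergman kernel of the disc $\mathbb{D}$ is an explicit rational function. Recall that for $\mathbb{D}$ one has $\tilde{K}_{\mathbb{D}}(z,w)=\dfrac{1}{\pi}\dfrac{1}{(1-z\bar w)^2}$ (the derivative-structure of $\mathcal{D}(\mathbb{D})$ makes this the square of the Szeg\H{o}-type factor, and in any case it is a concrete rational expression in $z$ and $\bar w$). The corollary to Theorem~\ref{T:TransFormulaCorrespondence} gives, for the proper map $f$ of multiplicity $m$ with local inverses $\{F_k\}$ on $\mathbb{D}\setminus V$,
\[
f'(z)\,\tilde{K}_{\mathbb{D}}(f(z),w)=\sum_{k=1}^{m}\tilde{K}_{D}(z,F_k(w))\,\overline{F_k'(w)},
\]
valid for all $z\in D$ and $w\in\mathbb{D}$.

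The key idea is to eliminate the multivalued inverses $F_k$ by conjugating the identity. First I would fix a generic point $w_0\in\mathbb{D}\setminus V$ and treat both sides as functions of $z\in D$. The left-hand side is $f'(z)$ times the rational function $\tilde{K}_{\mathbb{D}}(f(z),w_0)$ of $f(z)$; since $f$ is itself to be shown rational, I would instead turn the argument around and use the right-hand side. The right-hand side is a finite sum of the reduced Bergman kernel of $D$ evaluated at $(z,F_k(w_0))$ against the antiholomorphic factors $\overline{F_k'(w_0)}$. By hypothesis $\tilde{K}_D(z,\zeta)$ is a rational function of its two arguments (holomorphic in $z$, antiholomorphic in $\zeta$), so each summand is a rational function of $z$ whose coefficients are built from the symmetric functions of the inverse branches $F_k(w_0)$ and $F_k'(w_0)$. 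These elementary symmetric functions are single-valued holomorphic (in fact rational/algebraic) functions on $\mathbb{D}\setminus V$ because they are the coefficients of the minimal polynomial whose roots are the preimages $f^{-1}(w_0)$; this is the standard device that replaces the multivalued $\{F_k\}$ by single-valued symmetric data. Hence the right-hand side, as a function of $z$, is rational.

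From here I would extract $f$ as follows. Since $\tilde{K}_D$ is rational, both sides of the transformation formula are rational in $z$ for each fixed $w_0$, and therefore $f'(z)\,\tilde{K}_{\mathbb{D}}(f(z),w_0)$ is rational in $z$. Choosing $w_0$ so that $\tilde{K}_{\mathbb{D}}(f(z),w_0)=\frac{1}{\pi}(1-f(z)\bar w_0)^{-2}$ does not vanish identically, I would solve algebraically: the rationality of $f'(z)\,(1-f(z)\bar w_0)^{-2}$ for at least two independent values of $w_0$ forces an algebraic relation among $f(z)$, $f'(z)$, and rational functions of $z$. Comparing the expressions obtained from two distinct choices $w_0=a$ and $w_0=b$ lets me cancel $f'(z)$ and deduce that $f(z)$ itself satisfies a relation of the form $R_1(z)(1-f(z)\bar a)^{2}=R_2(z)(1-f(z)\bar b)^{2}$ with $R_1,R_2$ rational; this is a polynomial equation of degree $2$ in $f(z)$ with rational coefficients, so $f(z)$ is algebraic over $\mathbb{C}(z)$. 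Finally, a single-valued algebraic function on a planar domain that extends holomorphically (here $f$ is an honest single-valued holomorphic proper map, not a branch) must be rational, which gives the conclusion.

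The main obstacle I anticipate is the bookkeeping needed to pass rigorously from "the symmetric functions of the branches $F_k(w_0)$ are single-valued" to a clean algebraic equation satisfied by $f$ alone, i.e.\ correctly handling the critical set $V$ and the branch points where the $F_k$ collide, and ensuring the relation extracted is nondegenerate (that I have not chosen $a,b$ making both sides vanish). In Bell's Bergman-kernel argument this is handled by observing that the left side is a rational function of $f(z)$ and invoking an algebraic-dependence/field-theoretic argument; I expect the analogous step here, separating the contribution of $f'(z)$ from that of $f(z)$ and confirming the resulting degree-two relation is genuinely nontrivial, to be the delicate part, while the rationality of each individual ingredient follows directly from the transformation formula and the explicit form of $\tilde{K}_{\mathbb{D}}$.
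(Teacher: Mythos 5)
Your setup is sound, and your observation that for a fixed non-critical $w_0$ the right-hand side of the transformation formula is just the finite linear combination $\sum_{k}\overline{F_k'(w_0)}\,\tilde{K}_D(z,\zeta_k)$ with \emph{constant} coefficients (the $\zeta_k=F_k(w_0)$ are fixed points of $D$) is correct; no symmetric-function device is needed at all for that step, and it even lets you bypass the lemma of Bell that the paper invokes. The genuine gap is in your final extraction of $f$. From two base points $a,b$ you obtain a single quadratic relation $R_1(z)(1-f(z)\bar a)^2=R_2(z)(1-f(z)\bar b)^2$ over $\mathbb{C}(z)$, and you conclude that $f$, being single-valued and algebraic of degree $\le 2$, must be rational. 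That last implication is false on a general planar domain: a branch of $\sqrt{1+z}$ is single-valued and holomorphic on all of $\mathbb{D}$, satisfies a monic quadratic over $\mathbb{C}(z)$, and is not rational. Solving your quadratic produces $f$ only up to the square root of a rational discriminant, and nothing in your argument rules that branch ambiguity out (nor have you checked that the quadratic is nondegenerate, i.e.\ that its leading coefficient $R_1\bar a^2-R_2\bar b^2$ is not identically zero).

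The paper closes this hole by arranging for a relation that is \emph{linear} in $f$: it differentiates the transformation formula once in $\bar w$, uses Bell's lemma on the functional $\Lambda(h)=\partial^{\alpha}\bigl(\sum_k F_k'(h\circ F_k)\bigr)(w)$ to see that the differentiated right-hand side is still a finite combination $\sum_{l,\beta}\bar c_{l,\beta}\,\overline{\partial}^{\beta}\tilde K(z,\zeta_l)$, hence rational in $z$, and then evaluates at $w=0$, where $K(\cdot,0)\equiv 1/\pi$ and $\partial_{\bar w}K(\cdot,0)=2(\cdot)/\pi$; the ratio of the two resulting rational functions is exactly $2f(z)$, with $f'$ cancelling and no root extraction. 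Your route can be repaired without Bell's lemma along similar lines: taking $w_0=0$ in the undifferentiated formula already shows $f'(z)/\pi$ is rational, so for any nonzero $a$ the quantity $(1-f(z)\bar a)^2=f'(z)/\bigl(\pi R_a(z)\bigr)$ is rational; expanding and using two distinct nonzero values $a,b$ gives two \emph{linear} equations in the unknowns $f$ and $f^2$ with rational coefficients and invertible coefficient matrix, from which $f$ is rational. As written, though, the concluding step does not follow.
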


\noindent \textbf{Acknowledgements.} The authors would like to thank Kaushal Verma for all the helpful discussions and suggestions. The authors would also like to thank the referee for carefully reading the article and giving suggestions to improve the previous version of this manuscript.

\section{Proof of Theorem 1.2}
\noindent In order to prove the transformation formula for the reduced Bergman kernels under proper holomorphic correspondences, we will need the following lemmas and propositions. We first state a result which we are going to use (see \cite[Theorem~16.3]{BellBook}):

\begin{res}Let $\Om \subset\mathbb{C}$ be a bounded domain and $ V\subset \Om $ be a discrete subset. Suppose that $ h $ is holomorphic on $ \Om\setminus V $ and $h \in L^{2}(\Om\setminus V)$, i.e.,
\[
\int_{\Om\setminus V} \lvert h\rvert^2  dA < \infty,
\]
then $ h $ has a removable singularity at each point of $ V $ and $h\in L^{2}(\Om)$. 
\end{res}

Let $f:\Om_1 \multimap \Om_2$ be a proper holomorphic correspondence between bounded planar domains $\Om_1$ and $\Om_2$. For functions $u$ and $v$ on  $\Om_2$ and $\Om_1$ respectively, define maps $\Lambda_{1}$ and $\Lambda_2$ by
\[
\Lambda_1(u) = \sum_{i=1}^{p}f_{i}'(u \circ f_{i})
\quad\text{and}\quad
\Lambda_2(v) = \sum_{j=1}^{q}F_{j}'(v \circ F_{j}),
\]
where $ f_i $ for $1 \le i \le p$ and $F_{j}$ for $1 \le j \le q$ are as in the definition of holomorphic correspondences defined locally on $\Om_1 \setminus V_1$ and $\Om_2 \setminus V_2$ respectively.

\begin{lem}
Given $(\Om_1, \Om_2, f)$ as above, the maps $\Lambda_{1}, \Lambda_2$ are bounded linear maps that satisfy:
\[
\Lambda_1(A^{2}(\Om_2)) \subset A^{2}(\Om_1) \quad\text{and} \quad \Lambda_2(A^{2}(\Om_1)) \subset A^{2}(\Om_2),
\]
where $A^2(\Om_i)$ denotes the Bergman space of $\Om_i$ for $i =1,2$.
\end{lem}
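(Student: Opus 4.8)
The plan is to verify three things for $\Gamma_1$ (the argument for $\Gamma_2$ being identical after swapping the roles of $D_1, D_2$ and of $p, q$): that $\Gamma_1(u)$ is a single-valued holomorphic function on $D_1\setminus V_1$, that it is square-integrable with norm controlled by $\|u\|_{A^2(D_2)}$, and that it extends across $V_1$ to an element of $A^2(D_1)$. Linearity is immediate from the definition, so the content lies entirely in well-definedness and boundedness.

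First I would address single-valuedness. A priori the branches $f_1,\dots,f_p$ are only defined locally on $D_1\setminus V_1$, and analytically continuing them around a loop encircling a point of $V_1$ merely permutes them. However, the expression $\sum_{i=1}^{p} f_i'(z)\,u(f_i(z))$ is a symmetric function of the branches, hence invariant under any such permutation; therefore it patches to a single-valued function on $D_1\setminus V_1$. Being a finite sum of products of holomorphic functions, $\Gamma_1(u)$ is holomorphic there.

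The heart of the matter is the $L^2$ estimate. By the Cauchy--Schwarz inequality,
\[
|\Gamma_1(u)(z)|^2 \le p \sum_{i=1}^{p} |f_i'(z)|^2\, |u(f_i(z))|^2.
\]
The right-hand sum is again symmetric in the branches, hence single-valued on $D_1\setminus V_1$, and I would integrate it using the area formula for holomorphic maps (whose real Jacobian is $|f_i'|^2$). Locally each branch contributes $\int |f_i'|^2\,|u\circ f_i|^2\,dA = \int_{f_i(\cdot)} |u|^2\,dA$, so globally
\[
\int_{D_1\setminus V_1} \sum_{i=1}^{p} |f_i'(z)|^2\, |u(f_i(z))|^2\, dA(z) = \int_{D_2} |u(w)|^2\, n(w)\, dA(w),
\]
where $n(w)$ denotes the number of points of the graph $V$ lying over $w$ whose first coordinate lies in $D_1\setminus V_1$. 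Since $\pi_2 : V \to D_2$ is a proper $q$-sheeted branched cover, $n(w)\le q$ for almost every $w\in D_2$, and therefore
\[
\int_{D_1\setminus V_1} |\Gamma_1(u)|^2\, dA \le pq\, \|u\|_{A^2(D_2)}^2.
\]

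Finally, $V_1$ is a discrete subset of $D_1$, so the Result above applies: the holomorphic $L^2$ function $\Gamma_1(u)$ on $D_1\setminus V_1$ has removable singularities at the points of $V_1$ and extends to an element of $A^2(D_1)$, with $\|\Gamma_1(u)\|_{A^2(D_1)} \le \sqrt{pq}\,\|u\|_{A^2(D_2)}$; this is exactly the asserted boundedness. The analogous computation with $\{F_j\}_{j=1}^{q}$ and the properness of $\pi_1 : V \to D_1$ yields $\Gamma_2(A^2(D_1)) \subset A^2(D_2)$. I expect the main obstacle to be the bookkeeping in the change-of-variables step, and in particular justifying that the pooled counting function is bounded by $q$ almost everywhere, which is where the properness of $\pi_2$ (and the branched-covering structure of the correspondence) is genuinely used.
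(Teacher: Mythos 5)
Your argument is correct and follows essentially the same route as the paper: Cauchy--Schwarz to pull the sum out of the square, a change of variables exploiting that the correspondence is $q$-to-one over $D_2$ to produce the factor $pq$, and the removable-singularity result for square-integrable holomorphic functions to extend $\Gamma_1(u)$ across the discrete set $V_1$. The only organizational difference is that the paper carries out the change of variables locally (smooth compactly supported functions, density, partition of unity) whereas you push forward globally with a counting function bounded by $q$ almost everywhere; both yield the same bound $\|\Gamma_1 u\|\le\sqrt{pq}\,\|u\|$.
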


\begin{proof}
The linearity of maps $\Lambda_i$ is straightforward. First, we will show that the maps $\Lambda_{i}$ preserve square integrable functions, and that $\Lambda_{1} : L^{2}(\Om_2) \rightarrow L^{2}(\Om_1)$ and $\Lambda_2 : L^{2}(\Om_1) \rightarrow L^{2}(\Om_2)$ are bounded operators. Since smooth functions with compact support are dense in $L^2$-space, it is sufficient to work with these functions and the final conclusion will follow by density property and using partition of unity.

Let $v \in L^{2}(\Om_1)$ be a smooth function with very small support $U \subset \Om_1 \setminus V_1$ such that $f_i(U) \cap f_j(U) = \emptyset$ for $i \neq j$. Set $\tilde{U}_i = f_i(U)$, and let $F_{j_i}$ be the inverse of $f_i$ on $\tilde{U}_i$. Observe that
\begin{multline*}
\langle \Lambda_{2}(v), \Lambda_{2}(v)\rangle_{2} = \int_{\Om_2} \lvert \Lambda_{2}(v)(w) \rvert^2\,dA(w)
\\
\le q \int_{\Om_2} \sum_{j = 1}^{q} \lvert F_{j}'(w)\rvert^2 \lvert v(F_{j}(w))\rvert^2 dA(w) 
= q \sum_{i =1}^{p}\int_{\tilde{U}_i} \sum_{j = 1}^{q} \lvert F_{j}'(w)\rvert^2 \lvert v(F_{j}(w))\rvert^2 dA(w).
\end{multline*}
where $\langle \cdot, \cdot\rangle_{i}$ denotes the inner-product on the Hilbert space $A^2(\Om_{i})$ for $i = 1,2$.
Since $F_{j}(\tilde{U}_i) \cap U \neq \emptyset$ if and only if $j = j_i$, therefore
\[
\langle \Lambda_{2}(v), \Lambda_{2}(v)\rangle_{2} \le q \sum_{i = 1}^{p} \int_{\tilde{U}_i} \lvert F_{j_i}'(w)\rvert^2 \lvert v \circ F_{j_i}(w)\rvert^2 dA(w).
\]
Set $F_{j_i}(w) = z$ on $\tilde{U}_i$, we get $w = f_i(z)$ and
\[
\langle \Lambda_{2}(v), \Lambda_{2}(v)\rangle_{2} \le  q \sum_{i =1}^{p} \int_{U} \lvert v\rvert^2 dA(z) = pq \langle v, v\rangle_{1} < +\infty,
\]
that is
\[
\langle \Lambda_{2}(v), \Lambda_{2}(v)\rangle_{2} \le pq \langle v, v\rangle_{1}.
\]
Thus, $\Lambda_{2}$ is a bounded linear operator from $L^{2}(\Om_1)$ into $L^{2}(\Om_2)$. Now if $v \in A^{2}(\Om_1)$, it is clear from the definition of $\Lambda_2$ that $\Lambda_{2}(v) \in \mathcal{O}(\Om_2 \setminus V_2)$. Since $V_2$ is a discrete subset of $\Om_2$, it follows by Result 2.1 that $\Lambda_{2}(v) \in \mathcal{O}(\Om_2)$, and therefore $\Lambda_{2}(v) \in A^{2}(\Om_2)$.
The analogous statement for $\Lambda_1$ can be proved similarly.
\end{proof}

\begin{lem}
If $ f : \Om_1 \multimap \Om_2 $ is a proper holomorphic correspondence between bounded planar domains, then
\[
\langle \Lambda_1(u), v \rangle_{1} =  \langle  u, \Lambda_2(v) \rangle_{2}
\]
for all $ u \in L^{2}(\Om_2) $ and $ v \in L^{2}(\Om_1)  $.
\end{lem}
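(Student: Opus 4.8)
The plan is to show that $\Gamma_1$ and $\Gamma_2$ are Hilbert-space adjoints of one another by a change-of-variables computation, after first reducing to a convenient dense class of test functions. By Lemma 2.3, $\Gamma_1 : L^{2}(D_2) \to L^{2}(D_1)$ and $\Gamma_2 : L^{2}(D_1) \to L^{2}(D_2)$ are bounded, so both $(u,v)\mapsto\langle \Gamma_1 u, v\rangle_1$ and $(u,v)\mapsto\langle u, \Gamma_2 v\rangle_2$ are bounded sesquilinear forms. It therefore suffices to verify the identity for $u$ and $v$ ranging over dense subsets of their respective $L^2$ spaces. Following the proof of Lemma 2.3, I would take $u \in C_c^{\infty}(D_2)$ and $v \in C_c^{\infty}(D_1)$, and, using a partition of unity together with the fact that $V_1 \subset D_1$ and $V_2 \subset D_2$ are discrete (hence of measure zero), further reduce to the case in which $\supp u = W$ and $\supp v = U$ are so small that $W \subset D_2 \setminus V_2$, $U \subset D_1 \setminus V_1$, and all the local branches $f_i$ on $U$ and $F_j$ on $W$ are single-valued with mutually disjoint images.

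With these reductions in place, I would expand
\[
\langle \Gamma_1 u, v\rangle_1 = \sum_{i=1}^{p}\int_{D_1} f_i'(z)\, u(f_i(z))\, \overline{v(z)}\, dA(z)
\]
and treat each summand by the substitution $w = f_i(z)$. Writing $F_{j_i}$ for the local inverse of $f_i$, so that $z = F_{j_i}(w)$, the holomorphic change of variables gives $dA(z) = \lvert F_{j_i}'(w)\rvert^2\,dA(w)$ and $f_i'(z) = 1/F_{j_i}'(w)$; since $\lvert F_{j_i}'\rvert^2/F_{j_i}' = \overline{F_{j_i}'}$, the $i$-th term becomes
\[
\int u(w)\, \overline{F_{j_i}'(w)}\, \overline{v(F_{j_i}(w))}\, dA(w) = \int u(w)\, \overline{F_{j_i}'(w)\, v(F_{j_i}(w))}\, dA(w),
\]
which is exactly a summand of $\langle u, \Gamma_2 v\rangle_2 = \sum_{j=1}^{q}\int_{D_2} u(w)\,\overline{F_j'(w)\, v(F_j(w))}\,dA(w)$.

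The heart of the matter, and the step I expect to be the main obstacle, is the bookkeeping showing that the sum over the $p$ branches $f_i$ produced on the left matches the sum over the $q$ branches $F_j$ on the right. The conceptual point is that both sides are integrals over the graph $V \subset D_1 \times D_2$ of the correspondence: the pairs $(z, f_i(z))$ with $z \in U$, $1 \le i \le p$, and the pairs $(F_j(w), w)$ with $w \in W$, $1 \le j \le q$, each parametrize the portion of $V$ lying in $U \times W$. A summand on either side is nonzero only when the corresponding branch carries $\supp v$ into $\supp u$ along $V$, and the substitution $w = f_i(z)$ sets up precisely the bijection between the contributing branches $f_i$ and $F_j$ over the overlap region. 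Once this identification is made, the two localized sums coincide term by term; undoing the partition of unity and passing to the $L^2$-closure via the boundedness from Lemma 2.3 then yields the identity for all $u \in L^{2}(D_2)$ and $v \in L^{2}(D_1)$.
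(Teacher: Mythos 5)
Your proof is correct and follows essentially the same route as the paper: reduce by boundedness and density to smooth test functions with small support, then compute via the change of variables $w = f_i(z)$, which turns $f_i'(z)\,dA(z)$ into $\overline{F_{j_i}'(w)}\,dA(w)$ and matches the branches of $f$ with those of the inverse correspondence. The only cosmetic difference is that the paper localizes only $u$ near a point of $D_2\setminus V_2$ (so that the sets $W_j = F_j(D_{w_0})$ are pairwise disjoint and exactly one branch $f_{i_j}$ contributes on each $W_j$) while keeping $v\in L^{2}(D_1)$ arbitrary, which makes the branch bookkeeping you single out as the main obstacle essentially automatic; also, the boundedness of the $\Gamma_i$ that you invoke is the preceding lemma (Lemma 2.2), not Lemma 2.3.
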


\begin{proof}
 We shall first prove the above result for all smooth $ u \in L^{2}(\Om_2) $ supported in small neighbourhoods of points in $ \Om_2 \setminus V_2 $ and for all $ v \in L^{2}(\Om_1)  $.  
	
	Let $ W $ be a neighbourhood of $ w_0\in \Om_2 \setminus V_2 $ such that $F_j$ for $1\leq j\leq q$ are well defined and $W_j := F_{j}(W)$ are pairwise disjoint sets. Let $ f_{i_j}$ be the inverse of $ F_j $ on $ W_j $. Let $ u \in L^{2}(\Om_2)$ be such that support of $ u$ is contained in $ W $. Therefore, support of $ \Lambda_1(u) $ is contained in $ \bigcup_{j = 1}^{q} W_j $ where $ \Lambda_1(u) = \sum_{i=1}^{p}f_{i}'(u \circ f_{i}) $. Now see that 	
	\begin{eqnarray*}
		\langle \Lambda_1(u), v \rangle_{1} &=&  \int_{\Om_1} (\Lambda_1(u)) \bar v dA(z) =\sum_{j = 1}^{q}\int_{W_j} (\Lambda_1(u)) \bar v dA(z) 
		=\sum_{j = 1}^{q}\int_{W_j} \sum_{i=1}^{p}f_{i}'(u \circ f_{i}) \bar v dA(z) \\
		&=&\sum_{j = 1}^{q}\int_{W_j} f_{i_j}'(u \circ f_{i_j}) \bar v dA(z) 
		= \sum_{j = 1}^{q}\int_{W} u  \overline{ F_j^{\prime}( v \circ F_j)} dA(w)
		\\
		&=&\int_{W} u  \sum_{j = 1}^{q}  \overline{ F_j^{\prime} (v \circ F_j)} dA(w)
		=  \int_{W} u \, \overline{ \Lambda_2(v)}\, dA(w)=\int_{\Om_2} u\,  \overline{ \Lambda_2(v)} \,dA(w)\\
		&=& \langle u, \Lambda_2(v) \rangle_{2}.
	\end{eqnarray*}
	Hence the result follows by density argument, as the linear span of all such $ u $ is dense in $ L^{2}(\Om_2) $.
\end{proof}

\noindent We need the following lemma about the extension of holomorphic functions:

\begin{lem}
Let $ \Om \subset \C$ be a domain and $ V\subset \Om $ be a discrete set. Suppose that $ h $ is holomorphic on $ \Om \setminus V $ and $h'$ has removable singularities at each point in $V$. Then $ h $ also has removable singularities at each point in $ V $.
\end{lem}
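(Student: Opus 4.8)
The plan is to reduce the statement to a single point of $V$ and then manufacture a holomorphic extension of $h$ out of the given extension of $h'$ by integrating. Since removability of a singularity is a purely local property, I would fix an arbitrary point $p \in V$ and, using that $V$ is discrete, choose a radius $r > 0$ so small that the punctured disc $B(p,r) \setminus \{p\}$ is contained in $D \setminus V$; on this punctured disc $h$ is holomorphic. By hypothesis $h'$ has a removable singularity at $p$, so it extends to a function $g$ that is holomorphic on the full disc $B(p,r)$.

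The key step is to observe that, because $B(p,r)$ is simply connected, the holomorphic function $g$ admits a holomorphic primitive $H$ on all of $B(p,r)$, i.e. $H' = g$ throughout $B(p,r)$. On the punctured disc $B(p,r) \setminus \{p\}$ we then have $(h - H)' = h' - g = 0$, and since the punctured disc is connected, $h - H$ equals a constant $c$ there. Consequently $h = H + c$ on $B(p,r) \setminus \{p\}$, and the right-hand side is holomorphic on the entire disc $B(p,r)$. This exhibits a holomorphic extension of $h$ across $p$, which is precisely the assertion that $h$ has a removable singularity at $p$. As $p \in V$ was arbitrary, the claim follows.

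I do not expect a genuine obstacle here; the argument is elementary. The only point deserving care is that the construction of the primitive $H$ relies on simple connectivity of the disc (equivalently, one may set $H(z) = \int_{\gamma} g$ along a path from a fixed basepoint), and that matching $h$ against a primitive of $h'$ is legitimate exactly because $h'$ is genuinely holomorphic across $p$, so $g$ is residue-free and admits a single-valued primitive. Alternatively, the same conclusion can be read off directly from Laurent series: writing $h(z) = \sum_{n} a_n (z-p)^n$ on the punctured disc, the removability of $h'(z) = \sum_{n} n a_n (z-p)^{n-1}$ forces $n a_n = 0$ for every $n \le -1$, hence $a_n = 0$ for all $n \le -1$, so $h$ has no principal part at $p$.
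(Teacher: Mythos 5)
Your argument is correct and is essentially identical to the paper's proof: both extend $h'$ across $p$, take a primitive on the (simply connected) disc, and observe that $h$ differs from that primitive by a constant on the punctured disc. Your write-up is in fact slightly more careful than the paper's (which misstates ``$p\in D\setminus V$'' and ``$D(p,r)\subset D\setminus V$'' where the punctured disc is meant), and the Laurent-series remark is a valid alternative, but nothing substantively new.
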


\begin{proof}
Since $ V $ is discrete set, for $p \in V$, choose $U \subset D$ a simply connected neighbourhood of $ p $ with $ \big(U \setminus \{p \}\big) \cap V = \emptyset $. By assumption, the holomorphic function $ h^{\prime} $ has removable singularity at $ p $ when restricted on $ U $. Let $ H $ be a primitive of $ h^{\prime} $ on $ U $, then $ (h - H)^{\prime}(z) = h^{\prime}(z) - h^{\prime}(z) = 0 $ for all $ z \in U \setminus \{p\} $. Hence $ h(z) = H(z) + c $ for all $ z \in U \setminus \{p\} $, and for some constant $ c \in \C $. Now, using the Riemann removable singularity theorem, $ h $ is holomorphic on $ U $. Hence all points of $ V $ are removable singularities for $ h $. This completes the proof.   

\end{proof}

Now we have the following proposition about reduced Bergman spaces:

\begin{prop}
If $ f : \Om_1 \multimap \Om_2 $ is a proper holomorphic correspondence between bounded planar domains, then
\[
\tilde{\Lambda}_{1}(u) := \Lambda_{1}\vert_{\mathcal{D}(\Om_2)}(u) \in \mathcal{D}(\Om_1) \;\; \text{and} \;\; \tilde{\Lambda}_{2}(v) := \Lambda_{2}\vert_{\mathcal{D}(\Om_1)}(v) \in \mathcal{D}(\Om_2) 
\]
for all $u \in \mathcal{D}(\Om_2)$ and $v \in \mathcal{D}(\Om_1)$.
\end{prop}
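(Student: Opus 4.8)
The plan is to reduce membership in the reduced Bergman space to the existence of a global primitive, and then to exhibit one explicitly. By Lemma~2.2 we already know that $\Gamma_1(u)\in A^2(D_1)$ whenever $u\in\mathcal{D}(D_2)\subset A^2(D_2)$, so $\Gamma_1(u)$ is automatically holomorphic and square-integrable on all of $D_1$. Thus the only thing left to verify is that $\Gamma_1(u)$ admits a primitive $H\in\mathcal{O}(D_1)$; this is precisely the extra condition distinguishing $\mathcal{D}(D_1)$ from $A^2(D_1)$.

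Since $u\in\mathcal{D}(D_2)$, I would write $u=g'$ for some $g\in\mathcal{O}(D_2)$. On any simply connected piece of $D_1\setminus V_1$ on which the branches $f_1,\dots,f_p$ are defined, the chain rule gives
\[
\Gamma_1(u)=\sum_{i=1}^p f_i'\,(g'\circ f_i)=\sum_{i=1}^p (g\circ f_i)'=\Big(\sum_{i=1}^p g\circ f_i\Big)'.
\]
The natural candidate for a primitive is therefore $G:=\sum_{i=1}^p g\circ f_i$. The key claim is that $G$ is a single-valued holomorphic function on all of $D_1\setminus V_1$, not merely locally defined. Although the individual branches $f_i$ are only local and get permuted as one loops around a point of $V_1$, the unordered collection $\{f_1(z),\dots,f_p(z)\}$ is exactly the fiber $\pi_2\pi_1^{-1}(z)$ and is globally well-defined on $D_1\setminus V_1$; hence any symmetric expression in the branches, and $G$ in particular, is invariant under the monodromy and patches together to a well-defined element of $\mathcal{O}(D_1\setminus V_1)$ satisfying $G'=\Gamma_1(u)$ there.

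It then remains to extend $G$ across the discrete set $V_1$. Because $\Gamma_1(u)\in A^2(D_1)\subset\mathcal{O}(D_1)$ by Lemma~2.2, the derivative $G'=\Gamma_1(u)$ has a removable singularity at each point of $V_1$. Applying Lemma~2.4 with $h=G$ shows that $G$ itself has removable singularities at the points of $V_1$, hence extends to some $H\in\mathcal{O}(D_1)$. Since $H'$ and $\Gamma_1(u)$ agree on the dense set $D_1\setminus V_1$, they agree on all of $D_1$ by the identity theorem, so $H$ is a global primitive of $\Gamma_1(u)$ and $\tilde{\Gamma}_1(u)=\Gamma_1(u)\in\mathcal{D}(D_1)$. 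The statement for $\tilde{\Gamma}_2$ follows verbatim after interchanging the roles of $(D_1,V_1,\{f_i\})$ and $(D_2,V_2,\{F_j\})$: writing $v=h'$ with $h\in\mathcal{O}(D_1)$, one uses the symmetric function $\sum_{j=1}^q h\circ F_j$ in place of $G$.

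I expect the single-valuedness of $G$ under the monodromy of the correspondence to be the main obstacle, as it is the one step that genuinely uses the global structure of $f$ rather than a purely local computation; once that is in hand, the extension across $V_1$ is a routine consequence of Lemma~2.4 together with the $L^2$-regularity already furnished by Lemma~2.2.
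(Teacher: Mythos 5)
Your proof is correct and follows essentially the same route as the paper: both construct the candidate primitive $\sum_i g\circ f_i$ on $D_1\setminus V_1$, use the $L^2$ bound to see its derivative extends across $V_1$ (Result~2.1 via Lemma~2.2), and then invoke Lemma~2.4 to extend the primitive itself. Your explicit monodromy argument for the single-valuedness of the symmetric sum is a welcome elaboration of a step the paper dismisses as ``easy to see,'' but it does not constitute a different approach.
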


\begin{proof}
\noindent Let $u \in \mathcal{D}(\Om_2)$. Therefore $u = \tilde{u}'$ for some $\tilde{u} \in \mathcal{O}(\Om_2)$. Observe that
\[
\tilde{\Lambda}_1(u) = \Lambda_{1}(u) = \sum_{i=1}^{p}f_{i}'(u \circ f_{i}) = \sum_{i=1}^{p} \left(\tilde{u} \circ f_i\right)' = \left(\sum_{i=1}^{p} \tilde{u} \circ f_{i}\right)'.
\]
Consider the function $h_1 = \sum_{i=1}^{p} \tilde{u} \circ f_{i}$. It is easy to see that $h_1 \in \mathcal{O}(\Om_1\setminus V_1)$, and since $V_1$ is a discrete set and $\tilde{\Lambda}_{1}(u) \in L^{2}(\Om_1)$ i.e. $h_1' \in L^{2}(\Om_1)$, it follows from Result 2.1 that $h_1'$ has removable singularities at points of $V_1$. Now by Lemma 2.4, $h_1$ also has removable singularities at points of $V_1$. Therefore $h_1 \in \mathcal{O}(\Om_1)$ and hence for all $u \in \mathcal{D}(\Om_2)$, we have 
\[
\tilde{\Lambda}_{1}(u) \in \mathcal{D}(\Om_1).
\]
The other part of the proposition can be proved in a similar manner.

\end{proof}

\begin{prop}\label{P:RestrictionDirichletGammas}
	If $ f : \Om_1 \multimap \Om_2 $ is a proper holomorphic correspondence between bounded planar domains, then
	\[
	\langle \tilde{\Lambda}_1(u), v \rangle_{1} =  \langle  u, \tilde{\Lambda}_2(v) \rangle_{2}
	\]
	for all $ u \in \mathcal{D}(\Om_2) $ and $ v \in \mathcal{D}(\Om_1)  $.
\end{prop}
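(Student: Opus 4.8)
The plan is to recognize that this proposition is an immediate consequence of Lemma 2.3 together with Proposition 2.5. The key structural observation is that the maps $\tilde{\Gamma}_1$ and $\tilde{\Gamma}_2$ are, by their very definition, nothing but the restrictions of $\Gamma_1$ and $\Gamma_2$ to the reduced Bergman spaces $\mathcal{D}(D_2)$ and $\mathcal{D}(D_1)$ respectively. Since the reduced Bergman space of a domain is a (closed) subspace of its Bergman space, and the Bergman space sits inside $L^2$, we have the inclusions $\mathcal{D}(D_2) \subset L^2(D_2)$ and $\mathcal{D}(D_1) \subset L^2(D_1)$.

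Concretely, first I would fix arbitrary $u \in \mathcal{D}(D_2)$ and $v \in \mathcal{D}(D_1)$. By the inclusions above, these are in particular elements of $L^2(D_2)$ and $L^2(D_1)$, so Lemma 2.3 applies verbatim and yields
\[
\langle \Gamma_1 u, v \rangle_{1} = \langle u, \Gamma_2 v \rangle_{2}.
\]
Next I would invoke the definitions $\tilde{\Gamma}_1(u) = \Gamma_1|_{\mathcal{D}(D_2)}(u) = \Gamma_1(u)$ and $\tilde{\Gamma}_2(v) = \Gamma_2|_{\mathcal{D}(D_1)}(v) = \Gamma_2(v)$, so that the expressions appearing in the displayed identity above coincide pointwise with those in the desired identity. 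Substituting $\tilde{\Gamma}_1 u$ for $\Gamma_1 u$ and $\tilde{\Gamma}_2 v$ for $\Gamma_2 v$ then gives
\[
\langle \tilde{\Gamma}_1 u, v \rangle_{1} = \langle u, \tilde{\Gamma}_2 v \rangle_{2},
\]
which is exactly the claim. Proposition 2.5 is used here only to guarantee that both sides are well-defined inner products of genuine $L^2$ elements, namely that $\tilde{\Gamma}_1 u \in \mathcal{D}(D_1) \subset L^2(D_1)$ and $\tilde{\Gamma}_2 v \in \mathcal{D}(D_2) \subset L^2(D_2)$, so that no integrability issue arises.

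I do not expect any genuine obstacle here; the real analytic work (the change-of-variables computation establishing the adjoint relation, and the removable-singularity argument placing the outputs back in the reduced Bergman spaces) has already been carried out in Lemma 2.3 and Proposition 2.5. The only point worth stating explicitly is the inclusion $\mathcal{D}(D) \subset L^2(D)$, which legitimizes applying the $L^2$-level identity to reduced Bergman functions; once that is noted, the proposition follows by simple restriction.
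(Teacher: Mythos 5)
Your proposal is correct and follows exactly the paper's route: both observe that $\tilde{\Gamma}_i$ is the restriction of $\Gamma_i$ to the reduced Bergman space, which sits inside $L^2$, so the identity is inherited directly from Lemma 2.3, with Proposition 2.5 ensuring the outputs land in the right spaces. Your write-up merely makes explicit the inclusions that the paper leaves implicit.
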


\begin{proof}
Since $\tilde{\Lambda}_{i}$ is just the restriction of $\Lambda_{i}$ on $\mathcal{D}(\Om_{j})$ where $j\neq i$, this result holds trivially from Lemma 2.3 and Proposition 2.5.
\end{proof}

\begin{proof}[Proof of Theorem~\ref{T:TransFormulaCorrespondence}]
	For a fixed $ w \in \Om_2 \setminus V_2 $ , consider the function
	\[
	G(z)=\sum_{j = 1}^{q}\tilde{K}_1(z,F_j(w))\overline{F_j^{\prime}(w)},\quad z\in \Om_1.
	\]
	Since $\tilde{K_1}(\cdot,\zeta)\in\mathcal{D}(\Om_1)$ for all $\zeta\in \Om_1$, we observe that $G\in\mathcal{D}(\Om_1)$.
	Now using the reproducing property of the reduced Bergman kernel, we get for an arbitrarily chosen $v\in\mathcal{D}(\Om_1)$
	\begin{eqnarray*}
		\langle v,G\rangle_1&=&\sum_{j = 1}^{q}F_j^{\prime}(w)\,\langle v,\tilde{K}_1(\cdot,F_j(w))\rangle_1
		=\sum_{j = 1}^{q}F_j^{\prime}(w)\,v(F_j(w))
		=(\tilde{\Lambda}_2(v))(w)\\
		&=&\langle\tilde{\Lambda}_2(v),\tilde{K}_2(\cdot,w)\rangle_2
		=\langle v,\tilde{\Lambda}_1(u)\rangle_1,
	\end{eqnarray*}
	where $u=\tilde{K}_2(\cdot,w)$. So, $\langle v,G\rangle_1=\langle v,\tilde{\Lambda}_1(u)\rangle_1$ for all $v\in\mathcal{D}(\Om_1)$. Therefore, $\tilde{\Lambda}_1(u)=G$. Thus, we have proved that
	\[
	\sum_{i=1}^{p} f_i^{\prime}(z)\tilde{K}_2(f_i(z), w) = \sum_{j = 1}^{q}\tilde{K}_1(z, F_j(w)) \overline{ F_j^{\prime}(w)}\quad \text{ for }z\in \Om_1,\,w\in \Om_2\setminus V_2.
	\]
	Since the expression on the left side of the equation is anti-holomorphic in $ w $ and $V_2$ is discrete, the points in $V_2$ are removable singularities of the expression on the right side of the equation. Hence, the formula holds everywhere by continuity.   
\end{proof}

\section{Proof of Theorem 1.6}
We will start by proving a weighted version of Result 2.1.

\begin{lem}Let $\Om\subset\mathbb{C}$ be a domain and $ V\subset \Om $ be a discrete subset. Suppose that $ h $ is holomorphic on $ \Om\setminus V $ and $h \in L^{2, \mu}(\Om\setminus V)$, i.e.,
\[
\int_{\Om\setminus V} |h|^2 \mu dA < \infty
\]
where $\mu$ is a positive measurable function on $\Om$ such that $1/\mu\in L^{\infty}_{loc}(\Om)$, then $ h $ has a removable singularity at each point in $ V $ and $h\in L^{2,\mu}(\Om)$. 
\end{lem}

\begin{proof}
Let $p \in V$ and $U$ be a relatively compact neighourhood of $p$ in $\Om$ such that $U \cap V = \{p\}$. By definition, $\mu \ge c$ a.e. on $U$ for some $c > 0$. Observe that
\[
\int_{U \setminus \{p\}} \lvert h\rvert^2 dA = \frac{1}{c} \int_{U \setminus \{p\} } \lvert h\rvert^2 c \,dA \le \frac{1}{c} \int_{U \setminus \{p\}} \lvert h\rvert^2 \mu dA  \le \frac{1}{c} \int_{\Om \setminus V} \lvert h\rvert^2 \mu dA < \infty.
\]
Therefore, by Result $2.1$, $h$ has a removable singularity at $p$. Since $p \in V$ is arbitrary, $h$ has a removable singularity at each point in $V$, i.e., $h\in\mathcal{O}(\Om)\cap L^{2,\mu}(\Om)$.
\end{proof}

Let $f:\Om_1 \longrightarrow \Om_2$ be a proper holomorphic map between two bounded planar domains $\Om_1$ and $\Om_2$ and $\nu$ be a weight function on $\Om_2$ as in Theorem $1.6$.
As we defined in the previous section, the corresponding maps $\Lambda_{1}$ and $\Lambda_2$ are given by:
\[
\Lambda_1(u) = f'(u \circ f) 
\quad\text{and}\quad
\Lambda_2(v) = \sum_{k=1}^{m}F_{k}'(v \circ F_{k}),
\]
where $u$ is function on $\Om_2$ and $v$ is a function on $\Om_1$, and $\{F_{k}\}_{k=1}^{m}$ are the local inverses of $f$ defined outside the set of critical values $V \subset \Om_2$ with $m$ being the multiplicity of $f$.

\medskip

Using similar arguments as in the proofs of Lemma 2.2 and 2.3, Proposition 2.5 and 2.6, we have the following analogous statements for the weighted case.

\begin{lem}
Given $(\Om_1, \Om_2, f, \nu)$ as above, the maps $\Lambda_{i}$, for $i \in \{1,2\}$, are bounded linear maps that satisfy:
\[
\Lambda_1(A^{2, \nu}(\Om_2)) \subset A^{2, \nu \circ f}(\Om_1) \quad\text{and} \quad \Lambda_2(A^{2, \nu \circ f}(\Om_1)) \subset A^{2, \nu}(\Om_2),
\]
where $A^{2, \mu}(\Om) = \{f \in \mathcal{O}(\Om) : \int_{\Om} \lvert f\rvert^2 \mu dA < \infty\}$ denotes the weighted Bergman space of $\Om$ with weight $\mu$.
\end{lem}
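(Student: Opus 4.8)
The plan is to mimic the proof of Lemma~2.2 from the previous section, replacing the unweighted $L^2$ pairings by their weighted counterparts and keeping track of how the weight transforms under the change of variables $w = f(z)$. The key observation is that the weight on $D_1$ is chosen precisely to be $\nu \circ f$, so that the Jacobian factor $|f'|^2$ appearing from the change of variables combines correctly with the weights; this is exactly why the statement pairs $A^{2,\nu\circ f}(D_1)$ with $A^{2,\nu}(D_2)$ rather than using the same weight on both sides.

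First I would establish boundedness of $\Lambda_2 : L^{2,\nu\circ f}(D_1) \to L^{2,\nu}(D_2)$. As in Lemma~2.2, by density it suffices to take a smooth $v$ supported in a small set $U \subset D_1 \setminus \tilde{V}$ on which the local inverses separate the images. Expanding $\langle \Lambda_2 v, \Lambda_2 v\rangle_{\nu}$ and using Cauchy--Schwarz to bound the cross terms, one reduces to the sum $\sum_k \int_{D_2} |F_k'(w)|^2 |v(F_k(w))|^2 \nu(w)\, dA(w)$. On each image piece only one inverse contributes, and substituting $z = F_k(w)$, so that $w = f(z)$ and $dA(w) = |f'(z)|^2 dA(z)$, the Jacobian $|F_k'(w)|^2 = |f'(z)|^{-2}$ cancels against $dA(w)$, while $\nu(w) = \nu(f(z)) = (\nu\circ f)(z)$ becomes the correct weight on $D_1$. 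This yields a bound by $m\langle v, v\rangle_{\nu\circ f}$ up to the combinatorial constant, exactly as before. The argument for $\Lambda_1 : L^{2,\nu}(D_2) \to L^{2,\nu\circ f}(D_1)$ is the symmetric computation: expanding $\langle \Lambda_1 u, \Lambda_1 u\rangle_{\nu\circ f}$ introduces the factor $|f'(z)|^2 (\nu\circ f)(z)$, and the substitution $w = f(z)$ again converts this into $\nu(w)\,dA(w)$, giving the boundedness estimate.

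Once boundedness on the weighted $L^2$ spaces is in hand, the holomorphicity and square-integrability conclusion follows as in Lemma~2.2, but now invoking Lemma~3.1 (the weighted version of Result~2.1) in place of Result~2.1. Concretely, for $u \in A^{2,\nu}(D_2)$ the function $\Lambda_1(u) = f'(u\circ f)$ is holomorphic on $D_1 \setminus \tilde{V}$ and lies in $L^{2,\nu\circ f}(D_1)$; since $\tilde{V}$ is discrete and $\nu\circ f$ satisfies the local hypothesis $1/(\nu\circ f)\in L^{\infty}_{loc}(D_1)$ (because $f$ is a proper map with discrete critical set, $\nu\circ f$ inherits the required local lower bound away from and across $\tilde{V}$), Lemma~3.1 gives removable singularities at each point of $\tilde{V}$, so $\Lambda_1(u)\in A^{2,\nu\circ f}(D_1)$. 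The same reasoning with $\Lambda_2$ and the discrete set $V\subset D_2$ shows $\Lambda_2(v)\in A^{2,\nu}(D_2)$ for $v\in A^{2,\nu\circ f}(D_1)$.

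The main obstacle I anticipate is verifying the local integrability hypothesis on the pulled-back weight, namely that $1/(\nu\circ f)\in L^{\infty}_{loc}(D_1)$ so that Lemma~3.1 applies on $D_1$. Away from the critical points this is immediate from $1/\nu \in L^{\infty}_{loc}(D_2)$ and the continuity of $f$, but near a critical point $f$ is locally a branched cover and one must check that composing with $f$ does not destroy the local $L^{\infty}$ bound on $1/\nu$; this follows because the image of a relatively compact neighborhood in $D_1$ is relatively compact in $D_2$, on which $1/\nu$ is essentially bounded. The bookkeeping with the Jacobian cancellation in the change of variables, while routine, is the other place where care is needed to ensure the weights match up exactly.
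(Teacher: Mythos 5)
Your proposal is correct and follows essentially the same route as the paper: the same change-of-variables computation in which the Jacobian $|f'|^2$ cancels against $|F_k'|^2$ and converts $\nu$ into $\nu\circ f$, followed by the weighted removable-singularity lemma (Lemma 3.1) to recover holomorphicity across the discrete exceptional set. The only (harmless) difference is that for $\Lambda_1(u)=f'(u\circ f)$ no removable-singularity argument is needed at all, since $f$ and $u$ are holomorphic on all of $D_1$ and $D_2$ respectively, so $\Lambda_1(u)\in\mathcal{O}(D_1)$ directly; the paper invokes the extension lemma only for $\Lambda_2$, so your careful verification that $1/(\nu\circ f)\in L^{\infty}_{loc}(D_1)$, while a reasonable point to record, is not needed for this step.
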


\begin{lem}
If $ f : \Om_1 \longrightarrow \Om_2 $ is a proper holomorphic mapping between bounded planar domains and  $\nu$ is a weight on $\Om_2$ as given in Theorem $1.6$, then
\[
\langle \Lambda_1(u), v \rangle_{\nu \circ f} =  \langle  u, \Lambda_2(v) \rangle_{\nu}
\]
for all $ u \in L^{2, \nu}(\Om_2) $ and $ v \in L^{2, \nu \circ f}(\Om_1)  $.
\end{lem}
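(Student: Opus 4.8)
The plan is to mirror the proof of Lemma 2.3, now carrying the weight through the change of variables. By the boundedness of $\Lambda_1$ and $\Lambda_2$ established in Lemma 3.2, both sides of the claimed identity are continuous in $u\in L^{2,\nu}(D_2)$; since the smooth functions supported in small neighbourhoods of points of $D_2\setminus V$ have dense linear span in $L^{2,\nu}(D_2)$, it suffices to verify the identity for such $u$ and for all $v\in L^{2,\nu\circ f}(D_1)$, and then invoke density.

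So I would fix $w_0\in D_2\setminus V$ together with a neighbourhood $D_{w_0}$ on which all the local inverses $F_k$ are well defined and the sets $W_k:=F_k(D_{w_0})$ are pairwise disjoint, and take $u$ smooth with $\supp u\subset D_{w_0}$. Since $\Lambda_1 u=f'(u\circ f)$ vanishes off $f^{-1}(\supp u)\subset\bigcup_{k=1}^m W_k$, I would split the integral defining $\langle\Lambda_1 u,v\rangle_{\nu\circ f}$ over the sets $W_k$ and change variables by $w=f(z)$ on each $W_k$, where $f$ is biholomorphic with inverse $F_k$.

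The key computation is the change of variables on a single $W_k$. Writing $z=F_k(w)$, one has $f'(z)=1/F_k'(w)$, while the real Jacobian of $z\mapsto w=f(z)$ equals $|f'(z)|^2$, so that $dA(z)=|F_k'(w)|^2\,dA(w)$; crucially $(\nu\circ f)(z)=\nu(w)$, which is precisely why the weight on $D_1$ is taken to be $\nu\circ f$. Combining these, the factor $f'(z)\,|F_k'(w)|^2=\overline{F_k'(w)}$, and the integral over $W_k$ becomes
\[
\int_{D_{w_0}} u(w)\,\overline{F_k'(w)\,v(F_k(w))}\,\nu(w)\,dA(w).
\]
Summing over $k$ reconstitutes $\overline{\Lambda_2 v(w)}$ inside the integral, and since $u$ is supported in $D_{w_0}$ the integral over $D_{w_0}$ equals that over all of $D_2$, yielding $\langle u,\Lambda_2 v\rangle_\nu$. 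The density argument then upgrades the identity to all $u\in L^{2,\nu}(D_2)$.

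The only genuine obstacle is the bookkeeping in the change of variables: keeping track of the antiholomorphic derivative $\overline{F_k'(w)}$ that emerges from combining $f'(z)=1/F_k'(w)$ with the Jacobian factor $|F_k'(w)|^2$, and confirming that the weight transforms as $(\nu\circ f)(z)=\nu(w)$ so that the two weighted inner products match up exactly. Everything else is routine and parallels Lemma 2.3.
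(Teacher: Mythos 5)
Your proposal is correct and follows essentially the same route as the paper: reduce by density to smooth $u$ supported near a point of $D_2\setminus V$, split the integral over the pairwise disjoint sets $W_k=F_k(D_{w_0})$, and change variables $w=f(z)$ on each piece so that $f'(z)\,\lvert F_k'(w)\rvert^2=\overline{F_k'(w)}$ and $(\nu\circ f)(z)=\nu(w)$ reassemble $\overline{\Lambda_2 v}\,\nu$. You in fact spell out the Jacobian bookkeeping more explicitly than the paper, which simply cites the change of variables.
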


\begin{prop}
If $ f : \Om_1 \longrightarrow \Om_2 $ is a proper holomorphic mapping between bounded planar domains and $\nu$ is a weight on $\Om_2$ as in Theorem 1.6, then
\[
\tilde{\Lambda}_{1}(u) := \Lambda_{1}\vert_{\mathcal{D}^{\nu}(\Om_2)}(u) \in \mathcal{D}^{\nu \circ f}(\Om_1) \;\; \text{and} \;\; \tilde{\Lambda}_{2}(v) := \Lambda_{2}\vert_{\mathcal{D}^{\nu \circ f}(\Om_1)}(v) \in \mathcal{D}^{\nu}(\Om_2) 
\]
for all $u \in \mathcal{D}^{\nu}(\Om_2)$ and $v \in \mathcal{D}^{\nu \circ f}(\Om_1)$.
\end{prop}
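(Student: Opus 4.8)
The plan is to adapt the argument of Proposition 2.5 to the weighted single-valued setting, using the weighted removable-singularity statement (Lemma 3.1) in place of Result 2.1 and the primitive-extension lemma (Lemma 2.4) exactly as before. The guiding observation is that both $\Lambda_1$ and $\Lambda_2$ carry derivatives to derivatives, so the whole problem reduces to showing that the natural primitive of each image extends holomorphically across the relevant discrete exceptional set.

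First I would dispose of $\tilde{\Lambda}_1$, which is the easy half. Given $u \in \mathcal{D}^{\nu}(D_2)$, write $u = \tilde{u}'$ with $\tilde{u} \in \mathcal{O}(D_2)$. Because $f$ is a genuine single-valued proper holomorphic map defined on all of $D_1$, the composition $\tilde{u}\circ f$ is holomorphic on the entire domain $D_1$, and the chain rule gives $\Lambda_1(u) = f'\,(u\circ f) = (\tilde{u}\circ f)'$. Thus $\Lambda_1(u)$ already admits a global primitive on $D_1$, while Lemma 3.2 supplies $\Lambda_1(u) \in A^{2,\nu\circ f}(D_1)$. Hence $\tilde{\Lambda}_1(u) \in \mathcal{D}^{\nu\circ f}(D_1)$ with no extension argument needed; the absence of a critical-value set on the $D_1$ side is what makes this direction immediate.

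The substantive step is $\tilde{\Lambda}_2$, and this is where I expect the main difficulty. Given $v \in \mathcal{D}^{\nu\circ f}(D_1)$, write $v = \tilde{v}'$ with $\tilde{v}\in\mathcal{O}(D_1)$; then $\Lambda_2(v) = \sum_{k=1}^{m} F_k'(v\circ F_k) = \bigl(\sum_{k=1}^{m} \tilde{v}\circ F_k\bigr)'$. Setting $h_2 = \sum_{k=1}^{m} \tilde{v}\circ F_k$, this function is a priori only holomorphic on $D_2\setminus V$, since the local inverses $F_k$ are defined only off the critical-value set $V$. The crux is to promote $h_2$ to a holomorphic function across each point of $V$. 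I would argue in two stages: by Lemma 3.2 we have $\Lambda_2(v) = h_2' \in A^{2,\nu}(D_2)$, so $h_2'$ is holomorphic and $\nu$-square-integrable on $D_2\setminus V$; since $V$ is discrete, Lemma 3.1 yields that $h_2'$ has removable singularities at each point of $V$. Lemma 2.4 then applies, since $h_2$ is holomorphic off the discrete set $V$ and its derivative now extends across $V$, and upgrades this to the conclusion that $h_2$ itself has removable singularities at the points of $V$. Therefore $h_2\in\mathcal{O}(D_2)$ is a global primitive of $\Lambda_2(v)$, and combined with the $\nu$-square-integrability already furnished by Lemma 3.2 we conclude $\tilde{\Lambda}_2(v)\in\mathcal{D}^{\nu}(D_2)$.

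The only genuinely delicate point is this passage from integrability of the derivative $h_2'$ to extendability of the primitive $h_2$ across the discrete set $V$, handled by the chain Lemma 3.1 followed by Lemma 2.4, precisely as Result 2.1 and Lemma 2.4 operated in the unweighted correspondence case of Proposition 2.5. Everything else is a routine translation of the earlier argument into the weighted single-valued language.
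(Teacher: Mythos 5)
Your proposal is correct and follows the paper's own argument essentially verbatim: the $\tilde{\Lambda}_1$ half via the global primitive $\tilde{u}\circ f$, and the $\tilde{\Lambda}_2$ half by forming $h=\sum_k \tilde{v}\circ F_k$ on $D_2\setminus V$ and applying Lemma 3.1 followed by Lemma 2.4 to extend it across the critical values. No substantive differences to report.
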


\begin{prop}\label{P:RestrictionDirichletLambdas}
	If $ f : \Om_1 \longrightarrow \Om_2 $ is a proper holomorphic mapping between bounded planar domains and $\nu$ is a weight on $\Om_2$ as in Theorem 1.6, then
	\[
	\langle \tilde{\Lambda}_1(u), v \rangle_{\nu \circ f} =  \langle  u, \tilde{\Lambda}_2(v) \rangle_{\nu}
	\]
	for all $ u \in \mathcal{D}^{\nu}(\Om_2) $ and $ v \in \mathcal{D}^{\nu \circ f}(\Om_1)  $.
\end{prop}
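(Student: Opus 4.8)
The plan is to observe that this proposition is an immediate consequence of the two results that precede it, in exactly the same way that Proposition 2.6 followed from Lemma 2.3 and Proposition 2.5 in the correspondence setting. The crucial structural point is that $\tilde{\Lambda}_1$ and $\tilde{\Lambda}_2$ are defined merely as the restrictions of the maps $\Lambda_1$ and $\Lambda_2$ to the weighted reduced Bergman spaces $\mathcal{D}^{\nu}(D_2)$ and $\mathcal{D}^{\nu \circ f}(D_1)$, and that each weighted reduced Bergman space sits inside the corresponding weighted $L^2$-space with the same inner product.

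First I would fix arbitrary $u \in \mathcal{D}^{\nu}(D_2)$ and $v \in \mathcal{D}^{\nu \circ f}(D_1)$. Since $\mathcal{D}^{\nu}(D_2) \subset L^{2, \nu}(D_2)$ and $\mathcal{D}^{\nu \circ f}(D_1) \subset L^{2, \nu \circ f}(D_1)$, we have in particular $u \in L^{2, \nu}(D_2)$ and $v \in L^{2, \nu \circ f}(D_1)$. Lemma 3.3 then applies directly and gives $\langle \Lambda_1 u, v \rangle_{\nu \circ f} = \langle u, \Lambda_2 v \rangle_{\nu}$.

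Next I would invoke Proposition 3.4, which guarantees that $\tilde{\Lambda}_1 u = \Lambda_1 u \in \mathcal{D}^{\nu \circ f}(D_1)$ and $\tilde{\Lambda}_2 v = \Lambda_2 v \in \mathcal{D}^{\nu}(D_2)$. Consequently the two inner products appearing in the statement of the proposition are well-defined, and because the pairings on $\mathcal{D}^{\nu \circ f}(D_1)$ and $\mathcal{D}^{\nu}(D_2)$ are inherited from $L^{2, \nu \circ f}(D_1)$ and $L^{2, \nu}(D_2)$ respectively, they agree with the pairings in Lemma 3.3. Replacing $\Lambda_i$ by $\tilde{\Lambda}_i$ in the identity from the previous step yields the claimed equality.

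Since the statement is thus a direct corollary of Lemma 3.3 together with Proposition 3.4, there is no genuine obstacle to overcome; the only point requiring a word of justification is the compatibility of the inner products, namely that passing to the reduced subspaces does not alter the pairings. This is automatic, as the inner product on each $\mathcal{D}^{\mu}(D)$ is by definition the restriction of the one on $L^{2, \mu}(D)$.
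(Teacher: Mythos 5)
Your argument is correct and is essentially the paper's own proof: the paper likewise observes that $\tilde{\Lambda}_i$ is just the restriction of $\Lambda_i$ to the weighted reduced Bergman space and deduces the identity immediately from Lemma 3.3 and Proposition 3.4. You simply spell out the inclusion of the reduced spaces into the weighted $L^2$-spaces and the compatibility of the inner products, which the paper leaves implicit.
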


\begin{proof}[Proof of Theorem~\ref{T:TansFormulaWeightedProper}]
	Let $V=\{f(z):f'(z)=0\}$. Since f is proper, $V$ is a discrete subset of $\Om_2$. For a fixed $w\in \Om_2\setminus V$, consider the function
	\[
	G(z)=\sum_{k = 1}^{m}\tilde{K}^{\nu \circ f}_1(z,F_k(w))\overline{F_k^{\prime}(w)},\quad z\in \Om_1.
	\]
	Since $\tilde{K}^{\nu \circ f}_{1}(\cdot,\zeta)\in\mathcal{D}^{\nu \circ f}(\Om_1)$ for all $\zeta\in \Om_1$, we have $G\in\mathcal{D}^{\nu \circ f}(\Om_1)$.
	Following the similar argument as in the proof of Theorem 1.2, we have for an arbitrarily chosen $v\in\mathcal{D}^{\nu \circ f}(\Om_1)$
\[
\langle v,G\rangle_{\nu \circ f}
=
\big\langle v, \tilde{\Lambda}_1 \big(\tilde{K}^{\nu}_2(\cdot,w)\big)\big\rangle_{\nu \circ f}.
\]
Therefore, $ \tilde{\Lambda}_1 \big(\tilde{K}^{\nu}_2(\cdot,w)\big)= G$. That is,
	\[
	f^{\prime}(z)\tilde{K}^{\nu}_2(f(z), w) = \sum_{k = 1}^{m}\tilde{K}^{\nu \circ f}_1(z, F_k(w)) \overline{ F_k^{\prime}(w)}\quad\quad\text{for }z\in \Om_1,\;w\in \Om_2\setminus V.
	\]
	As before, the formula holds everywhere by continuity.  
\end{proof}

\section{Proof of Theorem 1.7}

Let $\tilde{K}(z,w)$ and $K(z,w)$ denote the reduced Bergman kernel functions associated to $\Om$ and $\mbb{D}$ respectively, where $\Om$ is a bounded planar domain and $\mbb{D}$ is the unit disc in $\mathbb{C}$. For a proper holomorphic map $f:\Om\longrightarrow\mbb{D}$, let $\{F_k\}_{k=1}^m$ denote the local inverses of $f$ defined on $\mbb{D} \setminus V$ where $m$ is the multiplicity of $f$ and $V$ is the set of all critical values of $f$. According to Corollary 1.3, the kernel functions transform according to
\[
f^{\prime}(z) K(f(z), w) = \sum_{k = 1}^{m}\tilde{K}(z, F_k(w)) \overline{ F_k^{\prime}(w)},\quad z\in \Om,w\in\mbb{D}.
\]

\noindent For fixed $w \in \mbb{D}$ and $\alpha \in \mbb{N} \cup \{0\}$, define a linear functional $\Lambda$ on the Bergman space of $\Om$, i.e., $A^2(\Om)$ by
\[
\Lambda(h) = \partial^{\alpha}\left(\sum_{k =1}^{m} F'_{k}(h \circ F_{k})\right)(w),
\]
where $\partial^{\alpha} = \frac{\partial^{\alpha}}{\partial z^{\alpha}}$ denotes the standard holomorphic differential operator of order $\alpha$. Following is a lemma by Bell about $\Lambda$:

\begin{lem}[Bell, \cite{BellProperHolomorphic}]
	Let $\xi_1, \xi_2, \ldots, \xi_q$ denote the points in $f^{-1}(w)$. There exist a positive integer $s$ and constants $c_{l, \beta}$ such that for $h \in A^2(\Om)$,
	\[
	\Lambda(h) = \sum_{l=1}^{q} \sum_{\beta \le s} c_{l, \beta} \partial^{\beta}h(\xi_{l}).
	\]
\end{lem}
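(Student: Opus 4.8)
The plan is to localize the computation of $\Lambda(h)$ at each preimage of $w$ and convert it into a residue. First I would write $\Phi_h(\la):=\sum_{k=1}^{m}F_k'(\la)\,h(F_k(\la))$ for the pushforward of $h$. By Lemma 2.2 applied to the proper map $f$ (i.e. the correspondence case $p=1$, $q=m$), we have $\Phi_h\in A^2(\mbb{D})$, so $\Phi_h$ is holomorphic on all of $\mbb{D}$ and $\Lambda(h)=\partial^{\alpha}\Phi_h(w)$ is well defined even when $w$ is a critical value. Fixing $w$ with distinct preimages $\xi_1,\dots,\xi_q$ of local multiplicities $n_1,\dots,n_q$ (so $\sum_{l=1}^{q}n_l=m$), I would choose pairwise disjoint discs $\overline{B(\xi_l,\epsilon_l)}\subset D$, each containing $\xi_l$ as the only preimage of $w$. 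As $\la$ ranges over a neighbourhood of $w$, exactly $n_l$ of the inverses $F_k$ take values inside $B(\xi_l,\epsilon_l)$, so the branches split into $q$ groups and $\Phi_h=\sum_{l=1}^{q}\Phi_{h,l}$, where $\Phi_{h,l}$ collects the $n_l$ branches near $\xi_l$.

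The key step is a residue representation of each $\Phi_{h,l}$. For $\la\neq w$ near $w$ the points $F_k(\la)$ lying in $B(\xi_l,\epsilon_l)$ are precisely the simple zeros of $f(z)-\la$ there, and since $F_k'(\la)=1/f'(F_k(\la))$, the residue at each such zero $z_j$ of $h(z)/(f(z)-\la)$ equals $h(F_k(\la))F_k'(\la)$; summing gives
\[
\Phi_{h,l}(\la)=\frac{1}{2\pi i}\oint_{|z-\xi_l|=\epsilon_l}\frac{h(z)}{f(z)-\la}\,dz .
\]
Both sides are holomorphic in $\la$ throughout a full neighbourhood of $w$ (the denominator never vanishes on the contour), so the identity extends to $\la=w$, and differentiating under the integral sign yields
\[
\partial^{\alpha}\Phi_{h,l}(w)=\frac{\alpha!}{2\pi i}\oint_{|z-\xi_l|=\epsilon_l}\frac{h(z)}{\bigl(f(z)-w\bigr)^{\alpha+1}}\,dz .
\]
Because $f(z)-w$ vanishes to order $n_l$ at $\xi_l$ and nowhere else inside the contour, the integrand has a single pole there, of order $n_l(\alpha+1)$. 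By the residue theorem the integral is a fixed linear combination of $h(\xi_l),h'(\xi_l),\dots,h^{(n_l(\alpha+1)-1)}(\xi_l)$ whose coefficients depend only on $\alpha$ and on the Taylor coefficients of $f$ at $\xi_l$, not on $h$. Summing over $l$ and setting $s=m(\alpha+1)$ gives
\[
\Lambda(h)=\sum_{l=1}^{q}\partial^{\alpha}\Phi_{h,l}(w)=\sum_{l=1}^{q}\sum_{\beta\le s}c_{l,\beta}\,\partial^{\beta}h(\xi_l),
\]
which is the asserted form.

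The main obstacle is the behaviour at a critical value $w\in V$. When $w\notin V$ each $F_k$ is holomorphic near $w$, and a direct application of the chain and product rules expresses $\partial^{\alpha}\bigl(F_k'\,(h\circ F_k)\bigr)(w)$ as a combination of $h(\xi_k),\dots,h^{(\alpha)}(\xi_k)$, giving the result at once with $s=\alpha$. At a critical value the individual inverses $F_k$ become branched and this naive computation breaks down; the residue representation is exactly the device that repairs it, since it packages the $n_l$ colliding branches into one contour integral that is manifestly holomorphic in $\la$ and turns $\partial^{\alpha}$ into a single residue at $\xi_l$. The only points that need care are the correct grouping of the branches and the choice of contours enclosing precisely the intended poles; the explicit values of the constants $c_{l,\beta}$ and the boundedness of $\Lambda$ play no role in the statement.
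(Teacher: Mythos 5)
Your proof is correct. Note that the paper does not actually prove this lemma---it is imported verbatim from Bell \cite{BellProperHolomorphic}---so there is no internal argument to compare against; what you have written is essentially a faithful reconstruction of Bell's original localization argument, and it works. The two ingredients you isolate are exactly the right ones: the contour-integral representation $\Phi_{h,l}(\lambda)=\frac{1}{2\pi i}\oint_{|z-\xi_l|=\epsilon_l}h(z)\,(f(z)-\lambda)^{-1}\,dz$, valid for noncritical $\lambda$ near $w$ because the $F_k(\lambda)$ landing in $B(\xi_l,\epsilon_l)$ are precisely the simple zeros of $f(\cdot)-\lambda$ there with residues $h(F_k(\lambda))F_k'(\lambda)$; and the fact that the residue of $h(z)\,(f(z)-w)^{-(\alpha+1)}$ at a zero of order $n_l$ of $f-w$ is, by the Leibniz formula applied to $h/g$ with $(f(z)-w)^{\alpha+1}=(z-\xi_l)^{n_l(\alpha+1)}g(z)$, a linear combination of $h(\xi_l),\dots,h^{(n_l(\alpha+1)-1)}(\xi_l)$ with coefficients independent of $h$. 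Two points you leave implicit are routine but worth recording: (i) that for $\lambda$ sufficiently close to $w$ \emph{every} preimage of $\lambda$ lies in $\bigcup_l B(\xi_l,\epsilon_l)$, which requires the properness of $f$ via the usual compactness argument, and that the count $n_l$ in each disc then follows from the argument principle; (ii) the appeal to Lemma 2.2 to see that $\Phi_h$ extends holomorphically across $V$ is actually superfluous, since the contour integral is itself holomorphic in $\lambda$ on a full neighbourhood of $w$ and therefore furnishes the extension of each $\Phi_{h,l}$ (and hence of $\Phi_h$) directly. Your bound $s=m(\alpha+1)$ is harmless since the lemma only asserts the existence of some $s$.
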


We now prove Theorem $1.7$ using the above lemma.

\begin{proof}
	Let $f^{-1}(0)=\{\zeta_1,\cdots,\zeta_q\}$. For the linear functional $\Lambda$ on $A^2(\Om)$, as defined above, corresponding to $\alpha=1\in\mathbb{N}\cup\{0\}$ and $0\in\mathbb{D}$, i.e.
	\[
	\Lambda(h) = \frac{\partial}{\partial w}\left(\sum_{k =1}^{m} F'_{k}(h \circ F_{k})\right)(0)
	\]
	the above lemma gives a positive integer $s >0$ and constants $c_{l,\beta}$ such that
	\[
	\frac{\partial}{\partial w}\left(\sum_{k =1}^{m} F'_{k}(h \circ F_{k})\right)(0)=\sum_{l=1}^{q} \sum_{\beta \le s} c_{l, \beta} \partial^{\beta}h(\zeta_{l})
	\]
	for all $h\in A^2(\Om)$.
	
	Therefore, on differentiating the transformation formula for the reduced Bergman kernels under $f$ with respect to $\bar{w}$ and setting $w=0$, we get
	\begin{eqnarray*}
		f^{\prime}(z) \frac{\partial}{\partial\bar{w}} K(f(z), 0)
		&=&\frac{\partial}{\partial\bar{w}}\left( \sum_{k = 1}^{m}\tilde{K}(z, F_k(\cdot)) \overline{ F_k^{\prime}(\cdot)}\right)(0)
		=\overline{\frac{\partial}{\partial w}\left( \sum_{k = 1}^{m}\overline{\tilde{K}(z, F_k(\cdot))} F_k^{\prime}(\cdot)\right)}(0)\\
		&=&\sum_{l=1}^{q} \sum_{\beta \le s} \bar{c}_{l, \beta} \overline{\partial^{\beta}\overline{\tilde{K}(z,\zeta_{l})}}
		=\sum_{l=1}^{q} \sum_{\beta \le s} \bar{c}_{l, \beta}\, \overline{\partial}^{\beta}\tilde{K}(z,\zeta_{l}),
	\end{eqnarray*}
where $\bar{\partial}^{\beta} = \frac{\partial^{\beta}}{\partial \bar{w}^{\beta}}$. Since $\tilde{K}$ is a rational function, $f^{\prime}(z) \frac{\partial}{\partial\bar{w}} K(f(z),0)$ is therefore a rational function in $z$. Similarly, taking $\alpha=0$ proves that $f^{\prime}(z) K(f(z),0)$ is a rational function in $z$. Note that for disc $\mathbb{D}$, the reduced Bergman kernel is equal to the Bergman kernel as $\mathcal{D}(\mathbb{D}) = A^2(\mathbb{D})$. Therefore,
	\[
	K(z,w)=\frac{1}{\pi}\frac{1}{(1-z\bar{w})^2}.
	\]
	So, $\frac{\partial}{\partial\bar{w}}K(z,0)=\frac{2z}{\pi}$ and $K(z,0)=\frac{1}{\pi}$. Thus,
	\[
	\frac{f^{\prime}(z) \frac{\partial}{\partial\bar{w}} K(f(z),0)}{f^{\prime}(z) K(f(z),0)}=2f(z).
	\]
	Hence, $f$ is a rational function.
\end{proof}

\section{Questions and Comments}

\begin{enumerate}

\item The transformation formula for the Bergman kernels was originally conceived not just for planar domains but for domains in $\mathbb{C}^n$, $n \ge 1$. We tried to do the same with the reduced Bergman kernel, but there were some obstructions. To begin with, since the definition of the reduced Bergman space involves primitives of holomorphic functions; it is not clear what should be the right analogue of the reduced Bergman space in higher dimensions. We tried with a few possible candidates, that is for a domain $U \subset \mathbb{C}^n$, consider the followings spaces 
\[
\mathcal{D}^{1}(U) = \Big\{f \in \mathcal{O}(U) \cap L^2(U) : f = \sum_{i = 1}^{n} \frac{\partial g_i}{\partial z_i} \ \text{for some} \  g_1, g_2, \ldots , g_n \in \mathcal{O}(U)\Big\}
\] 
\[
\mathcal{D}^{2}(U) = \Big\{f \in \mathcal{O}(U) \cap L^2(U) : f = \frac{\partial g}{\partial z_i} \ \text{for some} \  g \in \mathcal{O}(U)\Big\}.
\]
As one can observe, two important properties of the reduced Bergman space for planar domains in the proof of transformation formula are (i) being a closed subspace of the Bergman space, (ii) being invariant under the map $\Lambda_i$ (defined on page 5). In the case of higher dimensions, both the candidate spaces $\mathcal{D}^{1}(U), \mathcal{D}^2(U)$ either don't have both or one of these two properties. So, the problem of finding the correct analogue of the reduced Bergman space in higher dimensions is still unanswered.

\item Theorem $1.7$ is analogous to a result due to Bell for Bergman kernels. It is therefore natural to ask if there is any bounded planar domain whose reduced Bergman kernel is rational, but the Bergman kernel is not rational. Bell proved that if $\Omega$ is an $n$-connected domain, $n>1$, with $C^{\infty}$ smooth boundary, then the Bergman kernel of $\Omega$ can not be a rational function. We know that the Bergman kernel and the reduced Bergman kernel for a simply connected domain are same. We tried to construct the desired example using the following relation between the Bergman kernel and the reduced Bergman kernel for an $n$-connected planar domain $\Omega$ (due to Schiffer and Bergman, see \cite{schifferandbergman})
\[
K_{\Omega}(z,\zeta)=\sum_{i,j=1}^{n-1}p_{ij}\,\omega_i'(z)\,\overline{\omega_j'(\zeta)}+\tilde{K}_{\Omega}(z,\zeta), \quad z,\zeta\in\Omega,
\]
where the coefficients $p_{ij}$ are real numbers and $\omega_i$, for $1\leq i\leq n-1$, are the harmonic measures on $\Omega$ corresponding to the $n-1$ inner boundary components.
But we cannot conclude anything from this formula unless we have explicit information about the first term on the right hand side in the relation.  

\item In the weighted case, the transformation formula for the weighted reduced Bergman kernels under a proper holomorphic map $ f : \Omega_1  \longrightarrow \Omega_2$ is obtained with respect to the weights $ \nu $ on $ \Omega_2 $ and weight $ \nu \circ f $ on $ \Omega_1 $. In general, if $ f $ is not a function, for instance in the case of proper holomorphic correspondence, it is not clear how to define a weight on $ \Omega_1 $ such that the transformation formula still holds true. 

\end{enumerate}

\end{document}